\documentclass{amsart}

\usepackage{amsmath,amssymb}
\usepackage{float}
\usepackage{color}
\usepackage{graphicx}

\usepackage{enumerate}
\numberwithin{equation}{section}

\usepackage{hyperref}
\usepackage{verbatim}

\newcommand{\var}{\operatorname{var}}
\newcommand{\Int}{\operatorname{int}}
\newcommand{\esup}{\operatorname{ess\,sup}}

\newcommand{\elim}{\operatorname{ess\,lim}}

\newcommand{\Reg}{\operatorname{Reg}}
\newcommand{\reg}{\operatorname{reg}}
\newcommand{\fr}{\operatorname{frac}}

\def\j{R}
\def\T{\mathcal{T}}
\def\eq{equation}

\def\ep{\varepsilon}

\def\Si{\Sigma}
\def\s{\sigma}

\def\Sn{\Sigma_n}
\def\St{\Sigma_*}
\def\Sb{\Sigma_b}

\def\r{\rho}

\def\<{\langle}
\def\>{\rangle}

\def\E{\mathbb{E}}
\def\N{\mathbb{N}}
\def\P{\mathbb{P}}
\def\R{\mathbb{R}}
\def\rd{\mathbb{R}^d}

\def\K{\mathcal{K}}

\def\F{\mathcal{F}}
\def\G{\mathcal{G}}
\def\Q{\mathcal{Q}}

\def\T{\mathcal{T}}
\def\bT{\mathbb{T}}

\def\ft{\mathfrak{t}}
\def\bff{\mathbf{f}}
\def\bfr{\mathbf{r}}

\def\1{\mathbf{1}}

\newcommand{\cl}{\operatorname{cl}}
\newcommand{\INt}{\operatorname{int}}
\newcommand{\conv}{\operatorname{conv}}
\newcommand{\reach}{\operatorname{reach}}
\newcommand{\Tan}{\operatorname{Tan}}

\newcommand{\cL}{\mathcal{L}}
\newcommand{\crit}{\operatorname{crit}}
\newcommand{\ed}{\stackrel{d}{=}}

\theoremstyle{plain}
   \newtheorem{thm}{Theorem}[section]
   \newtheorem{thms}{Theorem}[section]

  \newtheorem{lems}[thms]{Lemma}
   \newtheorem{lem}[thm]{Lemma}
   \newtheorem{props}[thms]{Proposition}

\theoremstyle{remark}

\newtheorem*{remark*}{Remark}

\theoremstyle{definition}
   
   \newtheorem{defs}[thms]{Definition}
   
   \newtheorem{rem}[thm]{Remark}
   \newtheorem{rems}[thms]{Remark}
   
\newtheorem{ex}[thms]{Example}

\begin{document}

\title[Mean fractal curvatures of code tree fractals]{Mean Minkowski content and mean fractal curvatures of random self-similar code tree fractals
}

\author{Jan Rataj}\curraddr{\it Jan Rataj, Charles University, Faculty of Mathematics and Physics, Soko\-lovs\-k\'a 83, 186 75 Praha 8, Czech Republic}
\author{Steffen Winter}\curraddr{\it Steffen Winter, Institute of Stochastics, Karlsruhe Institute of Technology, Englerstr. 2, D-76131 Karlsruhe, Germany}
\author{Martina Z\"ahle}\curraddr{\it Martina Z\"ahle, Institute of Mathematics,  University of Jena, Ernst-Abbe-Platz 2, D-07743 Jena, Germany.}

\begin{abstract}
We consider a class of random self-similar fractals based on code trees which includes random recursive, homogeneous and V-variable fractals and many more. For such random fractals we consider mean values of the Lipschitz-Killing curvatures of their parallel sets for small parallel radii. Under the uniform strong open set condition and some further geometric assumptions we show that rescaled limits of these mean values exist as the parallel radius tends to $0$. Moreover, integral representations are derived for these limits which recover and extend those known in the deterministic case and certain random cases. Results on the mean Minkowski content are included as a special case and shown to hold under weaker geometric assumptions.
\end{abstract}

\thanks{Steffen Winter and Martina
Zähle received support from the DFG grant 433621248.}

\subjclass[2020]{Primary 28A80; Secondary 28A75, 53C65, 60D05,
60G57}

\keywords{random self-similar sets, homogeneous random fractals, V-variable fractals, fractal curvatures, mean values, mean Minkowski content}

\maketitle

\section{Introduction}\label{introduction}
For a compact set $F\subset\R^d$ and $k\in\{0,\ldots,d-1\}$, the (total) \emph{fractal Lipschitz-Killing curvatures} of $F$ of order $k$ are introduced as rescaled (essential or averaged) limits
\begin{align} \label{eq:def-frac-curv}
    \elim_{\ep\rightarrow 0}\limits\ep^{D-k}\,C_k(F(\ep)) \qquad \text{ or }\qquad \lim_{\delta\rightarrow 0}\frac{1}{|\ln\delta|}\int_\delta^1\ep^{D-k}\, C_k(F(\ep))~\ep^{-1}d\ep,
\end{align}
provided these limits exist. Here, for $\ep>0$, $F(\ep)$ denotes the closed $\ep$-neighborhood of $F$ (see \eqref{eq:def-parset}), $C_k(A)$ is the total mass of the $k$-th curvature measure $C_k(A,\cdot)$ of a compact set $A$, and $D$ a suitable scaling exponent.

$C_{d-1}(A)$ corresponds to the surface area of the boundary $\partial A$ of $A$ as a marginal case.
If $\partial A$ is $C^2$-smooth, then, up to some normalizing constants, the total curvature $C_k(A)$ agrees with the integral over the boundary of the symmetric function of order $d-1-k$ of the principal curvatures. These are well-known geometric invariants in differential geometry. In geometric measure theory, extensions to more general sets are known, for which symmetric functions of generalized principal curvatures are integrated over the unit normal bundles of the sets.
(The Lipschitz-Killing differential forms provide an equivalent approach.)
Here we do not need the explicit representation of these curvatures and the corresponding  measures, but only some essential properties, which are recalled in Section \ref{Preliminaries}.

For completeness we also include the volume $C_d(F(\ep)):=\mathcal{L}^d(F(\ep))$ into our considerations. For $k=d$, the limits in \eqref{eq:def-frac-curv} are well known as Minkowski content and average Minkowski content. In this case, the exponent $D$ in \eqref{eq:def-frac-curv} is clearly the Minkowski dimension. Also for the other indices $k$ the scaling exponent $D$ is typically the Minkowski dimension of $F$, provided $F$ is a sufficiently nice fractal.

If $F$ is a random fractal, then one may ask for the existence of the limits in \eqref{eq:def-frac-curv} in the almost sure sense. While such almost sure limits exist in some cases, e.g. for some random recursive constructions, see \cite{Ga00} for
$k=d$ and \cite{Za11} for all $k$, this cannot be expected in general. One can then still ask for the existence of corresponding limits of expectations:
\begin{align} \label{eq:def-mean-frac-curv}
    \elim_{\ep\rightarrow 0}\limits\ep^{D-k}\,\E C_k(F(\ep)) \qquad \text{ or }\qquad \lim_{\delta\rightarrow 0}\frac{1}{|\ln\delta|}\int_\delta^1\ep^{D-k}\, \E C_k(F(\ep))~\ep^{-1}d\ep.
\end{align}
In the present paper it is our aim to study such limits of expectations for a large class of random self-similar sets, which allow dependencies, and establish conditions guaranteeing their existence.
The model we introduce will include as special cases the classical random recursive sets of \cite{Fa86,Gr87,MW86}, homogeneous random fractals as introduced in \cite{Ha92}, \cite{Fa95}, and also $V$-variable fractals as defined in \cite{BHS12} and extended in \cite{Za23}.
For those classes of sets some mean value results (i.e., results regarding the limits in \eqref{eq:def-mean-frac-curv}) are known, see \cite{Ga00}, \cite{Za11} for the case of random recursive sets, \cite{Za20}, \cite{RWZ23} for homogeneous fractals, and \cite{Za23} for $V$-variable fractals.
In the latter case such mean value results are only known for $k=d$ and $d-1$, hence our results are also novel for V-variable fractals. The integral formulas we obtain for the limits in \eqref{eq:def-mean-frac-curv} are structurally the same as the formulas known in these special cases. For the random recursive case mentioned above the a.s.\ limits are equal to those of the expectations. This is certainly not true in general. For example, in \cite{Tr23} it is shown under some mild conditions on homogeneous random fractals that the a.s.\ average Minkowski content does not exist. Hence, while in some cases almost sure limits as in \eqref{eq:def-frac-curv} do not exist, one still has existence of the limits in the mean in \eqref{eq:def-mean-frac-curv}.

The random fractals we introduce are constructed by means of random labeled code trees, in which each vertex is labeled by a random iterated function system (RIFS), i.e., a random family of contracting similarities. In contrast to previous models we allow for more dependencies between different paths in the tree as well as between the RIFSs in the same construction step.

In Section \ref{Preliminaries} some notions and properties related to curvature measures are recalled, in particular, geometric conditions for their existence.
The random code trees with RIFSs as labels are introduced in Section \ref{sect_code_tree_construction}. Here the back path property in Assumption (A2) is essential for our purposes. It is fulfilled for all former models mentioned above.
Some examples are presented in Section \ref{Examples}. For the random Sierpi\'nski gaskets discussed in Example~\ref{ex:dependent-SG}, the families of RIFSs associated to different construction steps form an independent sequence. However, within each construction step the RIFSs are dependent.
Example \ref{ex:3} provides a simple model, where dependencies between different levels of the labeled tree occur.
The random carpets in Example~\ref{ex:percolation} are defined in such a way that neighboring squares along horizontal lines are chosen dependent on each other. This model also exhibits dependencies between the levels.
The main results, regarding the existence of the limits in \eqref{eq:def-mean-frac-curv} for our new model, are formulated in Section \ref{sec:main_res}, see Theorem \ref{maintheorem}, Remark \ref{rem:partial}, and Theorem \ref{positive}. In particular, it follows that the a.s.\ Minkowski dimension of such random fractals can be strictly less than the Minkowski dimension in the mean sense, cf. Remark \ref{rem:dimensions}.

Section \ref{Proofs} contains the proofs. In Remark \ref{rem:extension} a possible extension to a more general class of models is roughly described, which includes the model of random self-similar code trees with necks from \cite{Tr21}.

\section{Preliminaries}\label{Preliminaries}

\subsection*{Curvature measures}
Our basic framework for curvature measures are \emph{sets with positive reach} introduced in the seminal work of Federer \cite{Fe59}. The \emph{reach} of a nonempty closed subset $A\subset\R^d$, denoted $\reach A$, is the supremum of all $r\geq 0$ such that any point $x\in\R^d$ with distance from $A$ less than $r$ has a unique closest point in $A$ (denoted by $\pi_A(x)$). If $\reach A>0$, then the \emph{curvature measures} $C_k(A,\cdot)$ of $A$, $k=0,\dots,d$, are defined as signed Radon measures in $\R^d$, with support in $\partial A$ if $k\leq d-1$; $C_d(A,\cdot)$ is the restriction of the Lebesgue measure $\cL^d$ to $A$. We refer to the original definition in \cite{Fe59} and to the survey \cite[Section~4]{RZ19}. The following selected properties of curvature measures, valid for any $k\in\{0,\dots,d\}$, $A\subset\R^d$ with positive reach and $B\subset\R^d$ Borel, will be needed here.

\begin{description}
    \item[motion covariance]
    $C_k(g(A),g(B))=C_k(A,B)$ for any Euclidean motion $g:\R^d\to\R^d$.
    \item[homogeneity]
    $C_k(rA,rB)=r^k C_k(A,B)$, $r>0$.
    \item[locality] If $U$ is open and $A'\subset\R^d$ is another set with positive reach such that $A\cap U=A'\cap U$, then
    $C_k(A,B)=C_k(A',B)$ for any $B\subset U$.
\end{description}
If a set $A\subsetneq\R^d$ with positive reach is \emph{full-dimensional}, which means that the tangent cone $\Tan(A,a)$ has dimension $d$ at any $a\in A$, then the curvature measures can be defined consistently also for the closure of the complement of $A$ in the way that
$$C_k(\cl(\R^d\setminus A),\cdot)=(-1)^{d-1-k}C_k(A,\cdot),\quad k=0,\dots,d-1,$$
see \cite[Example~9.10]{RZ19}.

\subsection*{Regularity of the distance function}
We use the notation $d_A$ for the distance function of a closed set $A\subset\R^d$, i.e., $d_A(x):=\inf_{a\in A}\|x-a\|$.
$d_A$ is a Lipschitz function and we say that $x\in\R^d$ is a \emph{critical point} of $d_A$ if the Clarke subgradient of $d_A$ at $x$, denoted $\partial d_A(x)$, contains the origin. An equivalent geometric condition for $x$ being a critical point of $d_A$ is that $x$ lies in the convex hull of the set of closest points to $x$ in $A$. We refer to \cite{Fu85} for the properties of critical points and critical values.
A number $r>0$ is called a \emph{critical value} of $d_A$ if $r=d_A(x)$ for some critical point $x$, and it is called \emph{regular} otherwise. We will write $\crit A$ for the set of all critical values of $d_A$. The set $\crit A$ is always closed in $(0,\infty)$, and it is a Lebesgue null set in space dimensions $d=2$ and $3$.

Recall that
\begin{align} \label{eq:def-parset}
    A(r):=\{x\in\R^d:\, d_A(x)\leq r\}
\end{align}
denotes the closed $r$-neighbourhood of $A$, $r>0$.
In \cite{Fu85}, Fu showed that if $r\not\in\crit A$ then $\partial(A(r))$ is a Lipschitz surface and $\reach \cl(\R^d\setminus A(r))>0$. If $d=2$ or $3$, then in view of the above mentioned property of critical values, this is true for Lebesgue almost all $r>0$. Moreover, in any dimension, if $r$ is large enough compared to the diameter $|A|$ of $A$, then $r$ is a regular value of $d_A$ and there is even a quantitative lower estimate for the reach of the closed complement $\cl(\R^d\setminus A(r))$ of $A(r)$ as well as

an upper bound for the total variation measure $C_k^{\var}(A(r),\cdot)$ of $C_k(A,\cdot)$:

\begin{lems}[{\cite[Theorem~4.1]{Za11}}] \label{lems:large distances}

For any $R>\sqrt{2}$ and $k=0,1,\ldots,d$ there exists a constant $c_k(R)$ such that for any compact set $A\subset\mathbb{R}^d$ and any $r\ge R|A|$,
$$\reach(\cl(\R^d\setminus A(r)))\ge|A|\sqrt{R^2-1},$$
$\partial (A(r))$ is a $(d-1)$-dimensional Lipschitz submanifold, and
$$\sup_{r\ge R|A|}\frac{C_k^{\var}(A(r),\rd)}{r^k}\le c_k(R)\, .$$
\end{lems}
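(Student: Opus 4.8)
\emph{Proof plan.} Since all three assertions are invariant under the homothety $A\mapsto\lambda A$, $r\mapsto\lambda r$ (by motion covariance and homogeneity of the curvature measures, and because $A(r)$, $|A|$ and $r$ all scale linearly), I would first normalise $|A|=\diam A=1$, so that $r\ge R>\sqrt2$. Next I would check that $r\notin\crit A$: if $x$ were a critical point of $d_A$ at level $r$, then $x=\sum_i t_i a_i$ with $t_i\ge0$, $\sum_i t_i=1$, and nearest points $a_i\in A$, $|x-a_i|=r$; since $\langle x-a_i,x-a_j\rangle=r^2-\tfrac12|a_i-a_j|^2\ge r^2-\tfrac12>0$ (the last inequality because $r>\sqrt2$), we would get
\[
0=\Big|\sum_i t_i(x-a_i)\Big|^2=\sum_{i,j}t_it_j\langle x-a_i,x-a_j\rangle\ge r^2-\tfrac12>0,
\]
a contradiction. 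By Fu's theorem recalled above, $\partial(A(r))$ is then a $(d-1)$-dimensional Lipschitz submanifold and $\reach(\cl(\rd\setminus A(r)))>0$, so the two remaining tasks are to make this reach bound explicit and to control the curvatures.

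For the reach, put $S:=\cl(\rd\setminus A(r))$; then $S\subset\{d_A\ge r\}$, so $|b-a|\ge r$ for all $b\in S$, $a\in A$. I would invoke Federer's criterion for positive reach \cite{Fe59}: $\reach S\ge\rho$ provided $\langle b-c,u\rangle\le\frac1{2\rho}|b-c|^2$ for all $c\in S$, all unit $u\in\Nor(S,c)$ and all $b\in S$. If $c\in\partial(A(r))$, then every $v$ with $\langle v,a-c\rangle<0$ for all nearest points $a$ of $c$ in $A$ lies in $\Tan(S,c)$ (moving from $c$ away from all nearest points raises $d_A$), whence $\Nor(S,c)$ is contained in the convex cone generated by $\{a-c:a\in A,\ |c-a|=r\}$. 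So a unit $u\in\Nor(S,c)$ can be written $u=\sum_i\lambda_i(a_i-c)$ with $\lambda_i\ge0$, $|c-a_i|=r$; for $b\in S$, using $|b-a_i|\ge r=|c-a_i|$,
\[
\langle b-c,a_i-c\rangle=\tfrac12\big(|b-c|^2+|c-a_i|^2-|b-a_i|^2\big)\le\tfrac12|b-c|^2,
\]
hence $\langle b-c,u\rangle\le\tfrac12|b-c|^2\sum_i\lambda_i$, while $1=|u|^2=\sum_{i,j}\lambda_i\lambda_j\langle a_i-c,a_j-c\rangle\ge(r^2-\tfrac12)\big(\sum_i\lambda_i\big)^2$ (using $|a_i-a_j|\le1$) gives $\sum_i\lambda_i\le(r^2-\tfrac12)^{-1/2}$. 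Thus $\reach S\ge(r^2-\tfrac12)^{1/2}$, which for $r\ge R$ is at least $(R^2-1)^{1/2}$, i.e. $|A|\sqrt{R^2-1}$ after unscaling. I would also record that $\reach S\ge(r^2-\tfrac12)^{1/2}\ge\tfrac{\sqrt3}{2}\,r$ (as $R^2>2$), since it is this $r$-linear lower bound, and not $\sqrt{R^2-1}$, that powers the curvature estimate.

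For the curvatures, the case $k=d$ is immediate: $C_d^{\var}(A(r),\rd)=\cL(A(r))\le\kappa_d(r+1)^d\le\kappa_d(1+R^{-1})^dr^d$. For $k\le d-1$, $S$ has positive reach and is full-dimensional, so $C_k(A(r),\cdot)=(-1)^{d-1-k}C_k(S,\cdot)$ and hence $C_k^{\var}(A(r),\cdot)=C_k^{\var}(S,\cdot)$; I would then apply the local Steiner formula for the set $S$ of positive reach: for $0\le\varepsilon<\reach S$ and Borel $B$,
\[
\cL\big(\{x:0<d_S(x)\le\varepsilon,\ \pi_S(x)\in B\}\big)=\sum_{j=0}^{d-1}\kappa_{d-j}\,\varepsilon^{\,d-j}\,C_j(S,B),
\]
the left-hand side being, for each fixed $\varepsilon$, a positive measure in $B$ with total mass $\le\cL(A(r))\le\kappa_d(r+1)^d$ (the collar lies in $\{d_A<r\}\subset A(r)$). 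Evaluating at $d$ nodes $\varepsilon_i\in(0,\reach S)$ of mutual spacing of order $r$ and applying Lagrange interpolation to this polynomial identity in $\varepsilon$ expresses each $C_j(S,\cdot)$ as a signed combination of these positive measures with coefficients bounded by $c(d)\,(\reach S)^{j-d}$; therefore
\[
C_k^{\var}(A(r),\rd)=C_k^{\var}(S,\rd)\le c(d)\,(\reach S)^{k-d}\,\cL(A(r))\le c(d)\,\big(\tfrac{\sqrt3}{2}r\big)^{k-d}\kappa_d(1+R^{-1})^dr^d,
\]
a scale-invariant bound; collecting constants into $c_k(R)$ completes the argument. (That $S$ is unbounded is harmless: the measures $C_j(S,\cdot)$, $j\le d-1$, live on the compact set $\partial(A(r))$, so one applies the formula locally, or first replaces $S$ by $S\cap\overline{B}(a_0,2r)$ for some $a_0\in A$, which agrees with $S$ near $\partial(A(r))$ by the locality property.)

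The crux — and the step I expect to fight hardest for — is the third assertion in its \emph{uniform} form $C_k^{\var}(A(r),\rd)\le c_k(R)\,r^k$, rather than mere finiteness for each fixed $r$. Bounding total curvature by $\H^{d-1}(\partial(A(r)))$ times a pointwise curvature bound fails, because $\H^{d-1}(\partial(A(r)))$ is of order $r^{d-1}$ while the generalised curvatures of $\partial(A(r))$ are genuinely unbounded below (at the creases of the parallel set), so the only honest estimate on them is the one-sided bound $\le 1/\reach S$. The resolution is two-fold: $\reach S$ is in fact of order $r$ (the sharp $\sqrt{r^2-\tfrac12|A|^2}$, stronger than the stated $|A|\sqrt{R^2-1}$), and — more robustly — routing the argument through the \emph{exact} Steiner polynomial removes pointwise curvature bounds from the picture altogether, leaving only the collar volume $\cL(A(r))\asymp r^d$ and the interpolation length $\reach S\asymp r$, with all cancellations among the unbounded curvatures of $\partial(A(r))$ taken care of automatically.
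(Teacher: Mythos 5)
Your argument is correct, but note first that the paper itself contains no proof of this lemma: it is imported verbatim from \cite{Za11} (Theorem~4.1), so there is no internal argument to compare against, only the original source. Measured against that, your first half is essentially the standard route: the scaling reduction, the exclusion of critical values via $\langle x-a_i,x-a_j\rangle=r^2-\tfrac12|a_i-a_j|^2\ge r^2-\tfrac12|A|^2>0$, Fu's theorem for the Lipschitz-manifold statement, and the quantitative reach bound $\reach(\cl(\R^d\setminus A(r)))\ge\sqrt{r^2-\tfrac12|A|^2}$ obtained by checking the Federer-type inequality on the cone spanned by the foot points $a-c$. (In fact you need the inequality only for proximal normals, where it directly forces uniqueness of nearest points below the threshold, so the appeal to the full criterion could be streamlined; also the bipolar step uses that $c$ is non-critical, which your first computation supplies.) Where you genuinely deviate is the curvature estimate: instead of estimating the integral representation of $C_k$ over the unit normal bundle of the complement set, as the positive-reach machinery behind \cite{Za11} does, you use only the positivity of the local collar volumes for $\varepsilon$ up to $\reach\ge\tfrac{\sqrt3}{2}r$ together with Lagrange interpolation of the local Steiner polynomial, giving $|C_j(\cl(\R^d\setminus A(r)),B)|\le c(d)\,\cL(A(r))\,(\reach)^{j-d}$ uniformly in Borel $B$ and hence (up to a factor $2$) the total-variation bound $\le c_k(R)r^k$. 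This is a clean, self-contained alternative: it trades any pointwise curvature control (your remark about the creases is exactly the right caution) for the $r$-linear reach bound, which you correctly extract rather than the weaker stated $|A|\sqrt{R^2-1}$. Two cosmetic points only: the normalization $|A|=1$ silently excludes the trivial case of a singleton $A$ (where $A(r)$ is a ball and the claims are immediate), and the identification $C_k^{\var}(A(r),\cdot)=C_k^{\var}(\cl(\R^d\setminus A(r)),\cdot)$ for $k\le d-1$ needs no full-dimensionality discussion, since in this literature the curvature measures of $A(r)$ at regular values are defined through the complement set in the first place.
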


We will also need the following result on weak continuity of curvature measures of parallel sets. Recall that the set of regular values of $d_A$ is open in $(0,\infty)$.

\begin{lems}[{\cite[Theorem~6.1]{RWZ23}}]  \label{L_cont}
If $A\subset\R^d$ is nonempty and compact, $r_0>0$ is a regular value of $d_A$ and $k\in\{0,\dots,d\}$ then $C_k(A(r),\cdot)$ converges weakly to $C_k(A(r_0),\cdot)$ as $r\to r_0$.
\end{lems}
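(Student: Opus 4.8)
The plan is to reduce the weak convergence of these signed measures to two ingredients --- a uniform total-variation bound near $r_0$, and convergence against smooth test functions --- the latter being obtained from the normal-cycle (Lipschitz--Killing form) representation of curvature measures. The case $k=d$ is elementary, since $C_d(A(r),\cdot)$ is the restriction of $\cL^d$ to $A(r)$ and $\cL^d(A(r)\triangle A(r_0))\to 0$ as $r\to r_0$, the level set $\{d_A=r_0\}=\partial A(r_0)$ being a Lebesgue null set; so fix $k\in\{0,\dots,d-1\}$. As $A$ is compact, all $A(r)$ with $|r-r_0|$ small lie in a fixed ball $K$, so $C_k(A(r),\cdot)$ and $C_k(A(r_0),\cdot)$ are finite signed measures on $K$, and for such measures weak convergence will follow once one has (a) a uniform bound $\sup_{|r-r_0|<\delta}C_k^{\var}(A(r),\R^d)\le M<\infty$ and (b) $\int f\,dC_k(A(r),\cdot)\to\int f\,dC_k(A(r_0),\cdot)$ for every $f\in C^\infty(\R^d)$.

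For (b) I would pass to normal cycles. The set of regular values of $d_A$ is open, so after shrinking $\delta$ every $r$ with $|r-r_0|<\delta$ is regular; by Fu's theorem $\cl(\R^d\setminus A(r))$ then has positive reach, $\partial A(r)$ is a $(d-1)$-dimensional Lipschitz submanifold, and $A(r)$ is full-dimensional and regular closed, so $C_k(A(r),\cdot)$ is well defined via the complement convention recalled above and is represented by the normal cycle $\mathbf N_{A(r)}$ of $A(r)$ --- the integer-multiplicity rectifiable $(d-1)$-current over the unit normal bundle of $A(r)$ in $\R^d\times S^{d-1}$ --- in the sense that $\int f\,dC_k(A(r),\cdot)=\mathbf N_{A(r)}((f\circ\pi)\,\varphi_k)$ for continuous $f$, where $\pi\colon\R^d\times S^{d-1}\to\R^d$ is the projection and $\varphi_k$ the (suitably normalized) $k$-th Lipschitz--Killing form. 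Since $\varphi_k$ is smooth and $f$ is smooth with support in $K$, the form $(f\circ\pi)\,\varphi_k$ is smooth and compactly supported, so (b) reduces to the weak convergence $\mathbf N_{A(r)}\to\mathbf N_{A(r_0)}$ of currents as $r\to r_0$.

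This convergence of normal cycles I would base on two geometric inputs. The first is a \emph{uniform reach bound}: $\reach\cl(\R^d\setminus A(r))\ge\eta>0$ for all $|r-r_0|<\delta$ (shrinking $\delta$ once more). This is a quantitative form of the regularity of $d_A$ at level $r_0$: by upper semicontinuity of the Clarke subgradient $x\mapsto\partial d_A(x)$ and compactness of the level set $\{d_A=r_0\}$, the quantity $\dist(0,\partial d_A(x))$ stays bounded away from $0$ for $d_A(x)$ near $r_0$, and I would convert this uniform non-criticality into the stated lower reach bound for the complements, in the spirit of Fu's analysis of tube regularity; concretely, were no such $\eta$ to exist, one could extract from a sequence $r_n\to r_0$ along which the reach of the complements tends to $0$ a limiting almost-critical configuration at level $r_0$, contradicting regularity of $r_0$ (care being needed here because reach is not semicontinuous under Hausdorff convergence, so one must argue with nearest points directly rather than with reaches of limit sets). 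The second input is the Hausdorff convergence $A(r)\to A(r_0)$ --- indeed $d_H(A(r),A(r_0))\le|r-r_0|$ --- together with the fact that $A(r_0)$ is regular closed, whence also the complements converge in the Hausdorff metric. Granting these, estimates bounding the total curvature of a set with positive reach in terms of its reach and diameter, as in Lemma~\ref{lems:large distances}, give $\sup_{|r-r_0|<\delta}\mathbf M(\mathbf N_{A(r)})<\infty$, hence also (a), while the continuity of the normal cycle under Hausdorff convergence of sets whose reaches are bounded below by a common positive constant --- a known property of the normal cycle, cf.\ \cite{RZ19} and Fu \cite{Fu85} --- yields $\mathbf N_{A(r)}\to\mathbf N_{A(r_0)}$. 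Combined with (a), this completes the proof. (An alternative to this last step: using that $(r_0-\delta,r_0+\delta)$ consists of regular values, one can build along a gradient-like flow for $d_A$ a bi-Lipschitz ambient deformation carrying $\partial A(r_0)$ onto $\partial A(r)$ and converging to the identity as $r\to r_0$, and transport $\mathbf N_{A(r_0)}$ along it.)

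I expect the uniform reach bound to be the main obstacle --- that is, the uniform regularity of $d_A$ in a neighbourhood of the regular level $r_0$, and the attendant continuity of the normal cycle --- because reach behaves badly under Hausdorff limits in general, so this is precisely the point where the hypothesis that $r_0$ is a regular value must be used in an essential, quantitative way rather than merely qualitatively; by contrast, the bound (a) only does the minor extra work of upgrading convergence on smooth $f$, which is all the normal-cycle convergence directly provides, to weak convergence of the signed measures $C_k(A(r),\cdot)$ on $K$.
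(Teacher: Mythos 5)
This lemma is not proved in the present paper at all: it is imported verbatim from \cite[Theorem~6.1]{RWZ23}, so there is no internal proof to compare with. Your outline is, in substance, the natural route to that cited result and rests on the same ingredients: pass to the closed complements $\cl(\R^d\setminus A(r))$, which have positive reach at regular values by Fu's theorem; get a lower bound on their reach which is uniform for $r$ near $r_0$ (openness of the set of regular values, compactness of the level band, lower semicontinuity of the distance from $0$ to the Clarke subdifferential, made quantitative as in Fu's tube analysis or the stability theory for offsets via the critical function/$\mu$-reach); combine this with Hausdorff convergence of the complements and the Federer-type continuity theorem for curvature measures (equivalently, normal cycles) of sets with reach bounded below; transfer back to $C_k(A(r),\cdot)$ by the complement convention; and use the uniform total-variation bound to upgrade convergence on smooth test functions to weak convergence, the case $k=d$ being elementary since $\{d_A=r_0\}$ is Lebesgue null. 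Two small caveats: the uniform reach bound you single out as the main obstacle is indeed the crux, but it is available in quantitative form in the literature, so this is a gap of detail rather than of substance; and your justification of the Hausdorff convergence of the complements for $r\uparrow r_0$ should invoke the regularity of $r_0$ (a point at which $d_A\le r_0$ on a whole neighbourhood would be a local maximum, hence a critical point at level $r_0$) rather than the regular-closedness of $A(r_0)$, which by itself does not rule out components of $\{d_A>r\}$ dying out just below level $r_0$.
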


\begin{rems}
    If $k=d$, then the assertion of Lemma~\ref{L_cont} holds for any $r_0>0$ (i.e., $r_0$ does not need to be a regular value of the distance function), and if $k=d-1$ then it holds for all $r_0>0$ up to a countable set, see \cite[Remark~2.5]{RWZ23}.
\end{rems}

\subsection*{Random closed sets and measurability issues}
Random closed sets in $\R^d$ are random variables taking values in the space of closed subsets of $\R^d$ equipped with the Fell topology. On the subfamily of nonempty compact sets (denoted by $\K$), the Fell topology agrees with that induced by the Hausdorff distance. We briefly recall some measurability properties, and refer to \cite{RWZ23} for further details.

The sets with positive reach form a measurable subfamily of the family of closed sets. The mappings $(r,K)\mapsto K(r)$ and $(r,K)\mapsto\cl(\R^d\setminus K(r))$
are measurable on $(0,\infty)\times\K$. Furthermore, for any $k\in\{0,\dots d\}$ and any bounded Borel set $B\subset\R^d$, the mappings $K\mapsto C_k(K,B)$ and $K\mapsto C_k^{\var}(K,B)$ are measurable functions on the family of sets with positive reach.
Denote by
$$\Reg:=\{(r,K)\in (0,\infty)\times\K:\, r\not\in\crit K\}$$
the set of \emph{regular pairs}, i.e., of pairs $(r,K)$ such that $r$ is a regular value of the distance function $d_K$. Note that the definitions of $\Reg$ given in \cite{Za11} and \cite{RWZ23} are equivalent, see \cite[Section~6]{RWZ23}.
$\Reg$ is a Borel subset of $(0,\infty)\times\K$.

\section{Construction of random self-similar code tree fractals}\label{sect_code_tree_construction}

We will introduce a class of random self-similar fractals

based on a labeled code tree construction extending several models in the literature.

As in \cite{RWZ23}, we start with a \emph{random iterated function system (RIFS)} $\F=(f_1,\dots,f_N)$ consisting of a random number $N$ of contracting similarities
$f_1,\ldots,f_N$ with (random) contraction ratios $r_1,\ldots,r_N$ satisfying $r_{\min}\leq r_i\leq r_{\max}$ for some deterministic values $0<r_{\min}\leq r_{\max}<1$.
Formally, $\F$ is a random variable with values in the space $(\Omega_0,\mathfrak{F}_0)$ of finite sequences of contracting similarities with contraction ratios in $[r_{\min},r_{\max}]$, and with the natural $\sigma$-algebra $\mathfrak{F}_0$ induced by uniform convergence of continuous functions on compact sets.
We assume throughout that $1<\E N<\infty$ and that the \emph{Uniform Open Set Condition} (UOSC) holds: there exists a (deterministic) nonempty open set $O$ such that almost surely,
\begin{\eq}\label{UOSC}
f_i(O)\subset O~{\rm and}~f_i(O)\cap f_j(O)=\emptyset\,,~i\neq j.
\end{\eq}

Let $\N^{<\infty}:=\bigcup_{n=0}^\infty \N^n$ be the space of finite sequences of natural numbers. We use the usual short notation $\s_1\s_2\ldots\s_n:=(\s_1,\s_2,\ldots,\s_n)$ for its elements. We will consider an infinite \emph{random tree} $\Sigma_*\subset\N^{<\infty}$, containing the empty sequence $\emptyset$ as its root, where the tree structure is understood with respect to the natural ordering in $\N^{<\infty}$, i.e., two sequences $\tau,\s\in\N^{<\infty}$ form an edge iff there is some $i\in\N$ such that $\s=\tau i$ or $\tau=\s i$.  Each node $\sigma\in\Sigma_*$ will be equipped with a label $\F_{\sigma}=(f_{\s 1},\ldots,f_{\s{N_\s}})$, which is an RIFS.
For any $n\in\N_0$, we denote by $\Sn$ the subset of $\Sigma_*$ of all sequences of length $n$, and for $\s=\sigma_1\sigma_2\ldots\sigma_n\in\Sn$, we set
\begin{\eq}\label{eq:bfs}
\bff_{\sigma}:=f_{\sigma_1}\circ f_{\sigma_1\sigma_2}\circ\dots\circ f_{\sigma_1\ldots\sigma_n}\,.
\end{\eq}
Observe that $\bff_\sigma$ is a similarity mapping with contraction ratio
$$\bfr_\s:= r_{\sigma_1}\cdot r_{\sigma_1\sigma_2}\cdot\ldots\cdot r_{\sigma_1\ldots\sigma_n}\,.$$
The random tree $\Sigma_*$ together with the associated (random) labels form the \emph{random labeled tree} $\T:=\{(\s,\F_{\sigma}): \s\in\St\}$.

Let us describe now in detail how $\T$ is constructed. Consider the space $\mathbb{T}:=(\{\emptyset\}\cup\Omega_0)^{(\N^{<\infty})}$ of all mappings $\ft:\N^{<\infty}\to \{\emptyset\}\cup\Omega_0$ and equip it with the usual $\sigma$-algebra $\mathfrak{T}$ induced by finite-dimensional cylinders. We will work throughout in the probability space $(\mathbb{T},\mathfrak{T},\P)$, where $\P$ is some probability measure on $\mathbb{T}$ (satisfying some additional assumptions which we specify later). We define the random variables $\Sigma_*$ and $\T$ in $(\mathbb{T},\mathfrak{T},\P)$ by setting, for each $\ft\in\mathbb{T}$,
$$
\Sigma_*(\ft):=\{\s\in\N^{<\infty}: \ft(\sigma)\neq\emptyset\}
$$

and
$$
\T(\ft):=\{(\s,\ft(\s)): \s\in\St\}.
$$
Note that the set $\T(\ft)$ is the graph of $\ft$ restricted to $\Sigma_*(\ft)$ and that any $\ft\in\mathbb{T}$ can be reconstructed from $\T(\ft)$. (Indeed, $\ft(\sigma)=\emptyset$ for all $\sigma\not\in\Sigma_*(\ft)$.) Hence there is a one-to-one correspondence between $\ft$ and $\T(\ft)$, and thus $\P$ may be interpreted as the distribution of $\T$.
The assumptions (A1) and (A2) below on the distribution $\P$ will ensure that  $\Sigma_*$ is a tree $\P$-a.s., justifying to call $\T$ a (random) labeled tree.

For any $\sigma\in\N^{<\infty}$, we introduce the \emph{shift by} $\sigma$ on $\bT$, defined by
$$\theta_\sigma: \ft\mapsto\ft\circ\iota_\sigma,\quad \ft\in\bT,$$
where
$$\iota_\sigma(\tau):=\sigma\tau,\quad \tau\in\N^{<\infty}$$
and $\s\tau$ denotes the concatenation of the sequences $\s$ and $\tau$.
If $X$ is any random variable on the basic probability space $\bT$ and $\sigma\in\N^{<\infty}$, we denote by $X^{[\sigma]}$ the shifted random variable
$$X^{[\sigma]}:\ft\mapsto X(\theta_\sigma\ft).$$
As a basic example of such a shifted variable, we introduce for the labeled tree $\T$ and any $\sigma\in \Sigma_*$, the labeled tree
$$ \T^{[\s]}=\{(\tau,\F_{\s\tau}): \s\tau\in\St\}\,,$$
which is a subtree of $\T$ rooted at vertex $\sigma$. Note that the RIFS $\F_{\s}$ associated to a node $\s\in\Sigma_*$ may also be viewed as a shift (of the RIFS $\F_\emptyset$, which we identify in the sequel with the primary RIFS $\F$ for convenience), that is, $\F_\s=\F^{[\s]}$. Similarly, we have $N_\s=N^{[\s]}$, i.e., the number of mappings in $\F_{\s}$ is a shift of the number $N=N_\emptyset$ of mappings in $\F$. We also set $N_{\s}:=0$ if $\F_{\s}=\emptyset$.

We impose the following two assumptions on the distribution $\P$.
\begin{enumerate}
\item[(A1)] \label{A1} $\P$-a.s.\ $\ft(\emptyset)\neq\emptyset$ and, for any $\s\in\N^{<\infty}$ and $i\in\N$, $\ft(\s i)=\emptyset$, if $N_{\s}<i$, and $\ft(\s i)\neq\emptyset$ otherwise.
(Note that this implies
$\Sigma_0=\{\emptyset\}$ and for all $n\geq 1$,

$$\Sn=\{\s_1\ldots\s_n: \s_1\ldots\s_{n-1}\in\Si_{n-1}, 1\leq\s_n\leq N_{\s_1\ldots\s_{n-1}}\}.)$$

\item[(A2)] \label{A2} For any $i\in \N$ with $\P(i\leq N)>0$, under the condition  $i\leq N$, the random labeled subtree $\T^{[i]}$ is independent of $f_i$ and has the same distribution as $\T$. More formally, this means that, for all measurable subsets $C\subset\Omega_0$ and $B\subset\bT$,
\begin{align*}\label{A_2}
&\P[f_i\in C,\,\T^{[i]}\in B \mid i\leq N]=\P[f_i\in C \mid i\leq N]\,\P[\T^{[i]}\in B \mid i\leq N]\\
&\text{ and } \quad \P[\T^{[i]}\in B\mid i\leq N]=\P(\T\in B).
\end{align*}

\end{enumerate}

\begin{defs} \label{def:codetree}
The labeled random tree
\begin{\eq}\label{labeled code tree}
\T:=\{(\s,\F_{\s}): \s\in\St\}
\end{\eq}
fulfilling (A1) and (A2), where $\F_\emptyset$ has the same distribution as $\F$,  will be called \emph{random self-similar code tree with back path independence}.
\end{defs}

\begin{rems}
    \begin{enumerate}
        \item[(i)]
    Note that for different indices $i$, the subtrees $\T^{[i]}$ may or may not depend on each other.
        \item[(ii)]
From iterated application of (A2) we infer that, for all $n\in\N$ and all $\sigma=\sigma_1\dots\sigma_n\in\N^n$ with $\P(\sigma\in\Sigma_n)>0$,
under the condition  $\sigma\in\Sigma_n$, the random labeled subtree $\T^{[\sigma]}$ is independent of the mappings $f_{\s_1},f_{\s_1\s_2},\dots,f_{\s}$ and has the same distribution as $\T$.
In particular, for any $n\in\N$ and $\sigma\in\N^n$, under the condition $\sigma\in\Sigma_n$, the RIFS $\F_{\sigma}$ has the same distribution as the primary RIFS $\F$.
        \item[(iii)]
Below we will often use special versions of the following relationship which results from (ii) via conditional expectations: For any random variable $X$ on the basic probability space with values in a measurable space $(E,\mathcal{E})$, $n\in\N$, $\sigma\in\N^{<\infty}$ and for any integrable function $h:(0,\infty)\times E\to\R$,
\begin{equation} \label{cond_exp_equation_1}
\E \left(\1\{\s\in\Sn\}h(\bfr_\sigma,X^{[\sigma]})\right)
=\P(\s\in\Sn)\,\int \E  h(\rho,X)\, \P_{\bfr_\sigma}(d\rho|\sigma\in\Sigma_n),
\end{equation}

where $\P_{\bfr_\sigma}(\cdot|\sigma\in\Sigma_n)$ is the conditional probability distribution of $\bfr_\sigma$ under the condition $\sigma\in\Sigma_n$. In particular, if $E=\R$ and $h(\rho,X)=g(\rho)X$ for some $g:(0,\infty)\to\R$, we obtain
\begin{equation} \label{cond_exp_equation}
\E \left(\1\{\s\in\Sn\}g(\bfr_\s)X^{[\sigma]}\right)
=\E \left(\1\{\s\in\Sn\} g(\bfr_\s)\right)\, \E X.
\end{equation}
\end{enumerate}
\end{rems}

Given some tree $\T$ as in Definition~\ref{def:codetree}, a random fractal set $F$ is obtained by applying at each construction step $n\in\N$ all the maps $\bff_\s$ of that step. More precisely, we set $\P$-a.s.
\begin{\eq}\label{F}
F:=\bigcap_{n=1}^\infty\bigcup_{\sigma\in\Sn}\bff_\sigma(\overline{O}).
\end{\eq}

The underlying labeled random tree $\T$ may interpreted as the \emph{generating tree} of the random fractal $F$.

In view of \eqref{F}, the random fractal set $F$ has a.s.\ the representation
\begin{\eq}\label{self-sim}
F=\bigcup_{i=1}^N f_i(F^{[i]})\,.
\end{\eq}
By assumption (A2), under the condition $i\in\Si_1$ the random set $F^{[i]}$ has the same distribution as $F$ and is independent of the corresponding random similarity mapping $f_i$. For different $i$'s the sets $F^{[i]}$ may depend on each other. Therefore, our model of random code tree fractals is an extension of \emph{stochastic self-similarity}.
Observe that also the following classes of fractals are special cases of our model:
\begin{enumerate}
  \item[1.] If one assumes ((A1) and) that the family $\{\F_{\s}:\s\in \St\}$ of random IFS is i.i.d., then $F$ is a \emph{random recursive self-similar set} in the sense of \cite{Fa86,Gr87,MW86}. Observe that in this case condition (A2) is satisfied. Hence the class of random recursive self-similar sets is included in our setting.
  \item[2.]   If one requires that for each $n\in\N_0$ the RIFS within the families $\{\F_{\s}: \s\in\Sn\}$ are all identical (i.e., there is only one RIFS at each level $n$), and if independence is assumed between the levels, then $F$ is a \emph{homogeneous random fractal} as considered in \cite{Ha92} or \cite{RWZ23}. Note that also in this case condition (A2) is satisfied and hence homogeneous fractals are included in our setting.
  \item[3.] Also \emph{$V$-variable fractals} are included in our model. Let $V\in\N$ and assume that at each level $n\in\N_0$, one has exactly $V$ i.i.d.\ copies $\G_{n,1},\ldots,\G_{n,V}$ of the RIFS $\F$ given, where independence is also assumed between the levels. Then for each vertex $\s\in\Sn$, one chooses one of them based on the \emph{type} $t(\s)$ of $\sigma$. The type $t(\s)$ of a vertex $\s\in\St$ is a random variable taking values in $\{1,\ldots,V\}$, assumed to be identically distributed for each $\s\in\St$. For $n\in\N_0$ and $\s\in\Sn$, one chooses $\F_{\s}:=\G_{n,t(\s)}$.

      If one assumes additionally that the family of types $\{t(\sigma):\sigma\in\St\}$ is independent and independent of the family $\{\G_{n,v}: n\in\N_0,v\in\{1,\ldots,V\}\}$ and if all types are assumed to be uniformly distributed, then this reproduces the $V$-variable model of Barnsley, Hutchinson and Stenflo \cite{BHS08,BHS12}.
      If we allow more general distributions for the types and more dependence, then we also recover the generalization of this model studied in \cite{Za23}. (Note that in \cite{Za23} the types are chosen together with the RIFS's $\G_{n,1},\ldots,\G_{n,V}$. It is assumed that for all $v\in\{1,\ldots,V\}$ and all $n\in\N_0$, the pair $(\G_{n,v},t_{n,v})$ has the same joint distribution, where $t_{n,v}$ is a vector of types of the same length as the vector $\G_{n,v}$, i.e., a type is chosen for each mapping in $\G_{n,v}$. No independence is assumed within the family $\{(\G_{n,v},t_{n,v}), v\in\{1,\ldots,V\}\}$ but these families are assumed to form an independent sequence w.r.t.\ $n$. Then for any $n\in\N_0$, $\s\in\Sn$ and $i\in\{1,\ldots,N_\s\}$ one sets $t(\s i):=t_{n,t(\s)}(i)$ (where $t_{n,v}(i)$ denotes the $i$th entry of the vector $t_{n,v}$) and $\F_{\s i}:=\G_{n+1,t(\s i)}$. This recovers the model considered in \cite{Za23}.

\end{enumerate}
Before turning to the main results (see Section~\ref{sec:main_res}), we illustrate the flexibility of our model with some examples.

\section{Examples}\label{Examples}

The aim of this section is to demonstrate that our setting allows to consider random fractals beyond the known classes described above.

If the independence between different levels in the tree is preserved, then the first equation in condition (A2), the independence of $f_i$ and $\T^{[i]}$ is satisfied automatically and we have a lot of freedom for depencencies within the levels. We can choose  for instance an arbitrary dependency structure for the random IFS at the first level, i.e. an arbitrary joint distribution. It is only important that each single RIFS has the same distribution as the primary RIFS $\F$, i.e. the marginal distributions need to coincide with that of $\F$. In order to satisfy the second equation in (A2), it is then necessary to repeat this dependency structure  in corresponding subfamilies at the subsequent levels. More precisely, for any $\s\in\Sigma_*$, the family $(\F_{\s i}: i\leq N_\s)$ should have the same joint distribution as the family $(\F_i: i\leq N_\emptyset)$. Between these families we could introduce at each level further dependencies, which we would then have to copy to subsequent levels to keep (A2) satisfied and so on.

We illustrate this with a very simple example, in which the primary IFS is chosen randomly from only two possible options, see Example~\ref{ex:dependent-SG}.

One can also construct tree fractals for which there are dependencies between different levels, see Examples~\ref{ex:3} and \ref{ex:percolation}.
While the first one is rather generic and only specifies some possible dependency structure, the second one is a very specific random carpet construction (a modification of fractal percolation). The intension was to use the dependencies to improve the connectivity within the fractal (compared to the independent case) without increasing the fractal dimension.

\begin{ex} \label{ex:dependent-SG}
Let $G=(g_1,g_2,g_3)$ be the IFS of the standard Sierpi\'nski gasket in $\R^2$, i.e.\ $g_i(x)=\frac 12 x+ t_i$, $x\in \R^2$, $i=1,2,3$, where $t_1=(0,0)$, $t_2=(\frac 12, 0)$ and $t_3=(\frac 14, \frac{\sqrt{3}}4)$, and let $G'=(g_1,\ldots,g_4)$ be the IFS of the corresponding equilateral triangle $T:=\conv\{(0,0),(1,0),(\frac 12, \frac{\sqrt{3}}2)\}$, i.e. $g_4(x)=\frac 12 A x+(\frac 12,0)$, where $A$ is the rotation by $\frac \pi 3$, see also Figure~\ref{fig:depSG1}.
The primary RIFS $\F$ is defined by choosing $G$ and $G'$ each with probability $\frac 12$.

To construct $F$, let $W:=\bigcup_{n\in\N_0}\{1,2,3,4\}^n$. We need two i.i.d.\ sequences $(M_n)_{n\in\N}$ and $(\G_\s)_{\s\in W}$,
independent of each other, where $M_n$ is a random number with uniform distribution on $\{1,2,3\}$ and $\G_\s$ has the same distribution as $\F$.

Given that the RIFS up to level $n-1$ have been chosen (such that the words at level $n$ are determined), the RIFS's at level $n$ are obtained as follows:

For each $\s\in\Sigma_{n-1}$,
set
$$
\F_{\s i}:=\begin{cases}
\G_\s & \text{ if } i=M_n,\\
\overline{\G}_\s & \text{ if } i\neq M_n \text{ and  } i\leq N_\sigma,
\end{cases}
$$
where  $\overline{\G}_\s$ is the IFS not chosen for $\G_\s$.

First note that the RIFS satisfies UOSC for the open set $O=\INt(T)$, since both IFS, $G$ and $G'$, satisfy OSC for $O$.
Secondly, it is easy to check that for any $\tau\in\Sigma_*$ the distribution of $\F_\tau$ is the same as that of $\F$. (Indeed, $\P(\F_\tau=G)=\frac 12=\P(\F_\tau=G')$.)

We claim that conditions (A1) and (A2) are satisfied.
For condition (A1) this is obvious, since the words of level $n$ are constructed recursively as daughters of words of level $n-1$. The independence of $f_i$ and $\T^{[i]}$ follows, since these random variables are generated by the RIFSs of different levels and the levels are independent by construction. Finally, for the equality in distribution of $\T^{[i]}$ and $\T$, observe that $\T=g((M_n)_{n\in\N},(\G_\s)_{\s\in W})$ for some measurable function $g$ and
$\T^{[i]}=g((M_n)_{n\geq 2},(\G_{i\s})_{\s\in W})$ for the same $g$. Since the sequences $(M_n)_{n\in\N}$ and $(\G_\s)_{\s\in W}$ e i.i.d.\ the subsequences $(M_n)_{n\geq 2}$ and $(\G_{i\s})_{\s\in W}$ are equal in distribution to the corresponding full sequences, and since they are independent this is enough to conclude
that $g((M_n)_{n\in\N},(\G_\s)_{\s\in W})=g((M_n)_{n\geq 2},(\G_{i\s})_{\s\in W})$ in distribution. Thus (A2) holds.

Let $F$ be the corresponding random fractal generated as in \eqref{F}. Observe that realizations of $F$ will show the following pattern: Out of the three (or four) daughters of a building block $T_\s=\bff_\s(T)$ at level $n-1$ exactly one (namely $T_{\s M_n}$, the copy at the corner indexed by $M_n$) will be different from all the others, while all the other daughters coincide. Moreover, the exceptional daughter will be in the same position for each mother of level $n-1$ (namely at the position indexed by $M_n$), see also Figure~\ref{fig:depSG2} for an illustration of the first construction steps.
This goes clearly beyond the homogeneous setting, while the independence of the different levels is preserved.
\end{ex}
\begin{figure}
  \includegraphics[width=0.45\textwidth]{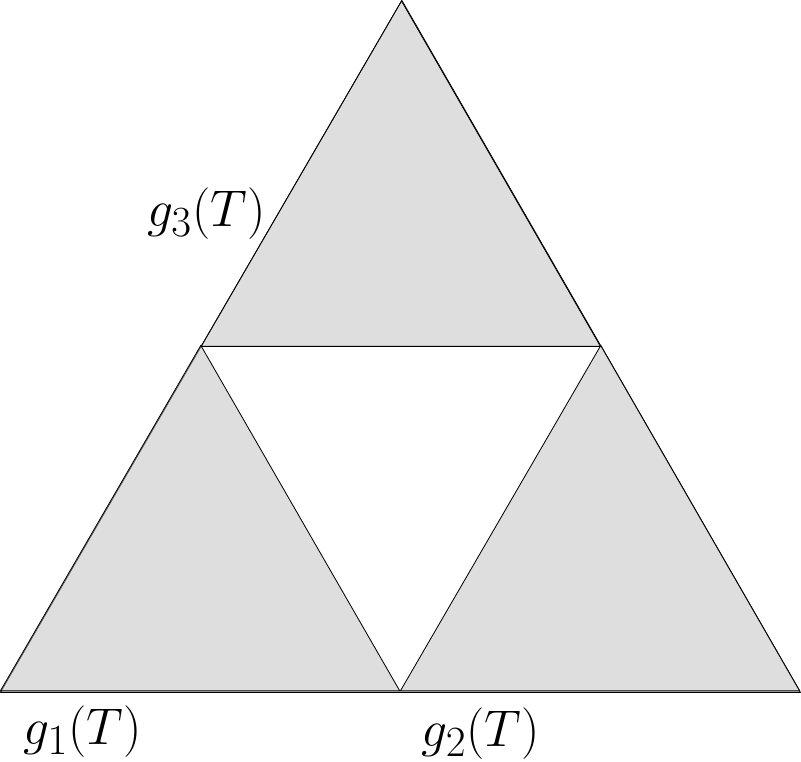}\hfill
  \includegraphics[width=0.45\textwidth]{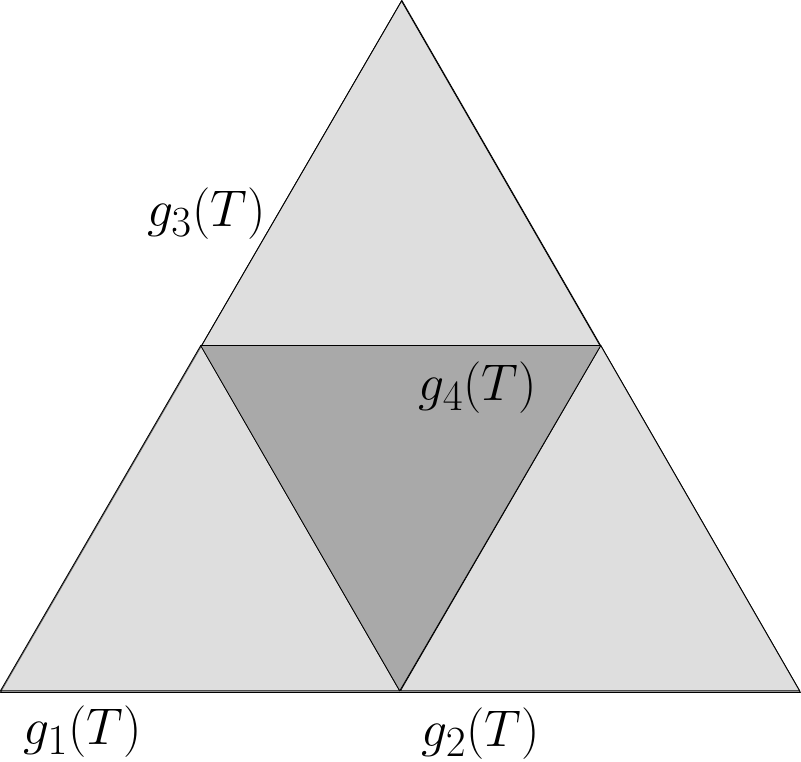}
  \caption{\label{fig:depSG1} Illustration of the first construction step of the two IFS $G$ and $G'$ in Example~\ref{ex:dependent-SG} used to generate a random Sierpi\'nski gasket with dependencies within the levels.}
\end{figure}

\begin{figure}
  \includegraphics[width=0.3\textwidth]{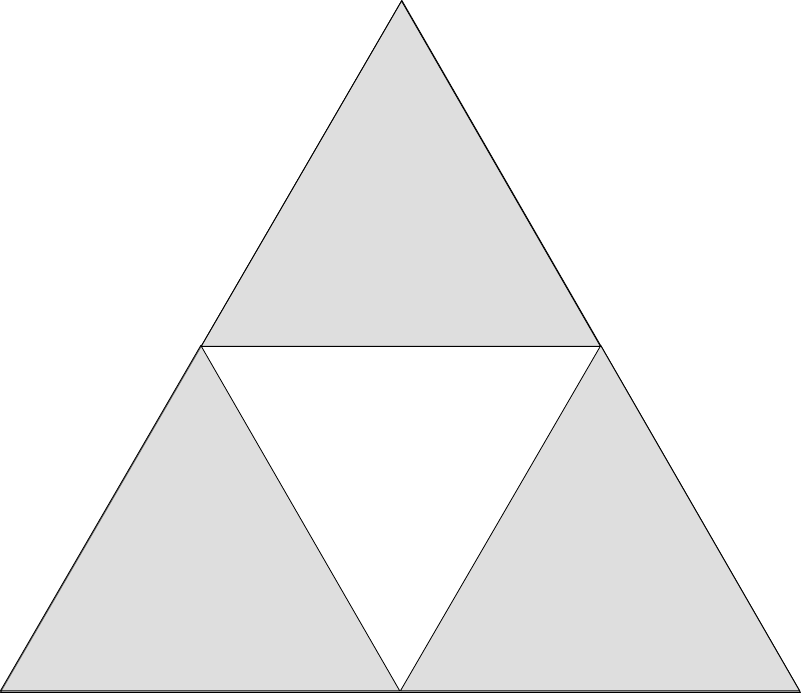}\hfill
  \includegraphics[width=0.3\textwidth]{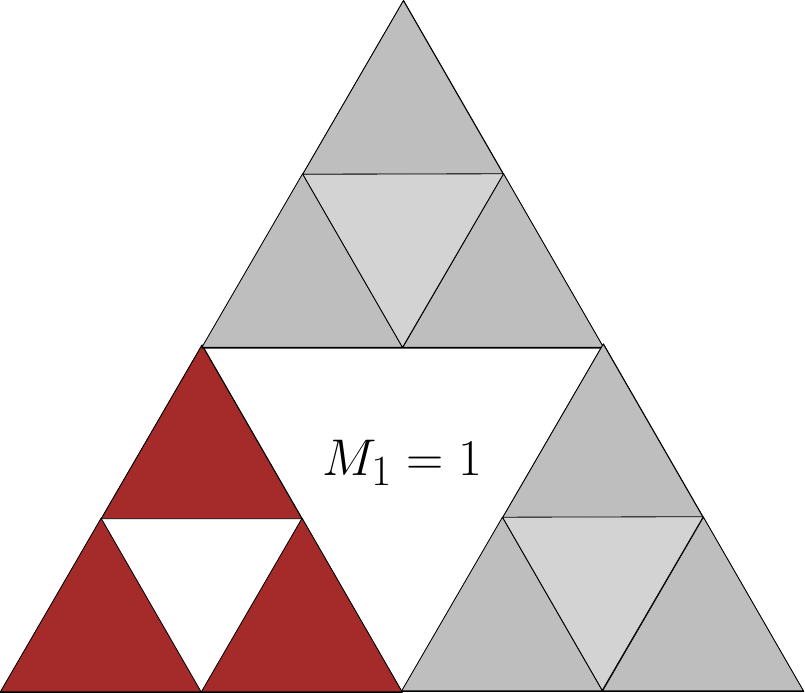}\hfill
  \includegraphics[width=0.3\textwidth]{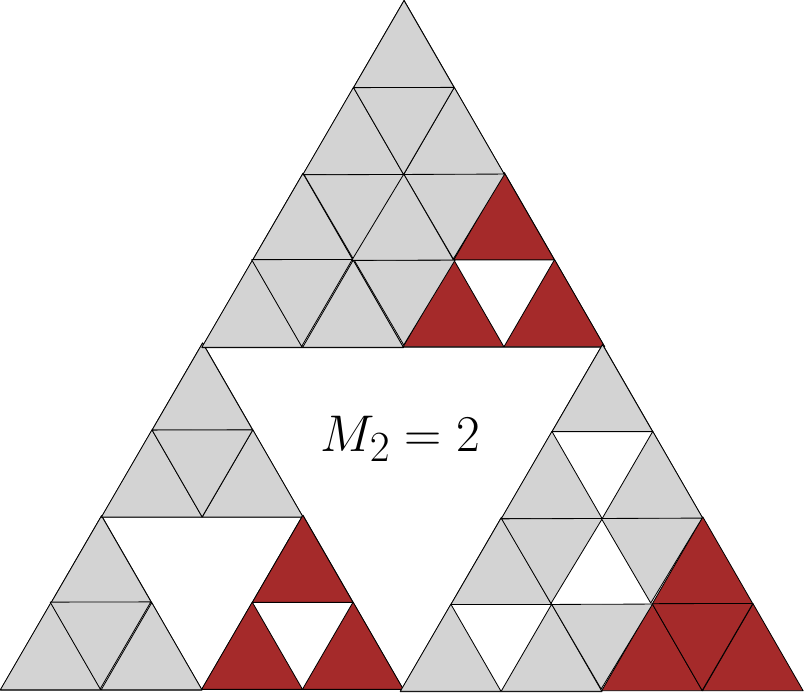}
  \caption{\label{fig:depSG2} Realization of the the first three construction steps of the random Sierpi\'nski gasket in Example~\ref{ex:dependent-SG}. The brown color indicates the 'exceptional' corner at each step (determined by the number $M_n$).}
\end{figure}

In the homogeneous or $V$-variable model, as well as in Example~\ref{ex:dependent-SG}, the RIFS's $\F_{\s},\F_{\tau}$ are independent if the lengths of $\sigma$ and $\tau$ are different (in other words, dependencies are allowed only within the same `time' level). Our model assumptions are, however, general enough to allow also dependencies across the time levels, as the next two examples show.
\begin{ex} \label{ex:3}
For simplicity, consider the case when the number $N$ of mappings in $\F$ is constant. Then $\Sigma_*=\bigcup_n \{1,\ldots,N\}^n$ is deterministic. Choose any finite set $W\subset\Sigma_*$ of nodes such that there is no pair of different elements of $W$ for which one element lies on the path to the root of the other element. We can consider a model where $\F_{\sigma\tau}=\F_{\sigma\tau'}$ whenever $\sigma\in\Sigma_*$ and $\tau,\tau'\in W$, and all other IFS's are independent. As a concrete example, consider $N=2$ and $W=\{1,21\}$. Our model can be constructed by induction as follows. Having $\F_{\sigma}$ constructed for all $\sigma$ with $|\sigma|\leq n$, set for any $\tau$ with $|\tau|=n-1$: $\F_{\tau 21}:=\F_{\tau  1}$, and choose $\F_{\tau 11}$, $\F_{\tau 12}$ and $\F_{\tau 22}$ independent of all IFS's chosen so far.
\end{ex}

\begin{ex} \label{ex:percolation}

\begin{figure}
\includegraphics[width=0.22\textwidth]{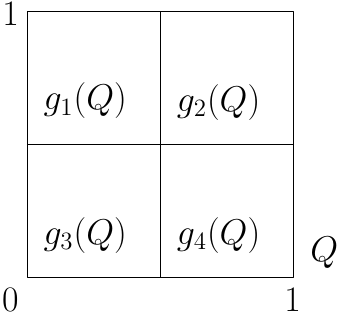}\hfill
\includegraphics[width=0.15\textwidth]{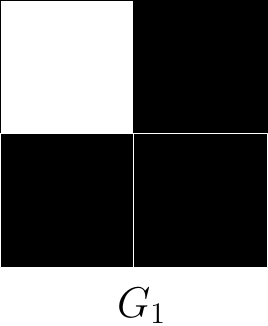}\hfill
\includegraphics[width=0.15\textwidth]{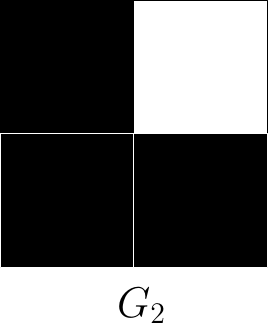}\hfill
\includegraphics[width=0.15\textwidth]{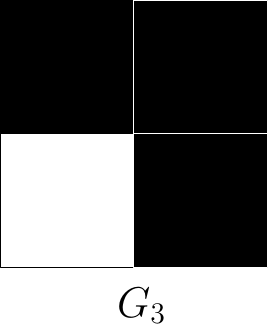}\hfill
\includegraphics[width=0.15\textwidth]{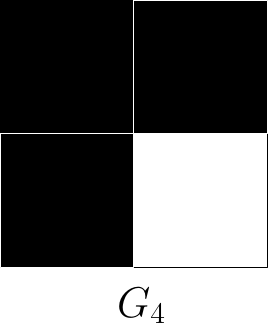}
  \caption{\label{fig:Ex2} The mappings $g_1,\ldots, g_4$ used in the IFSs $G_1,\ldots G_4$ in Example~\ref{ex:percolation}. Depicted are the images $g_i(Q)$ of the unit square $Q$ (left) and the unions of the images of $Q$ under the mappings of the IFSs $G_1,\ldots, G_4$ (right).}
\end{figure}

\begin{table}
\begin{tabular}{|c||c|c|c|c|}
  \hline
  & $\genfrac{}{}{0pt}{2}{\square\blacksquare}{\blacksquare\blacksquare}$
  & $\genfrac{}{}{0pt}{2}{\blacksquare\square}{\blacksquare\blacksquare}$
  & $\genfrac{}{}{0pt}{2}{\blacksquare\blacksquare}{\square\blacksquare}$
  & $\genfrac{}{}{0pt}{2}{\blacksquare\blacksquare}{\blacksquare\square}$ \\[1mm]
  & $p^l_1$  & $p^l_2$  & $p^l_3$ & $p^l_4$\\
    \hline\hline
  \begin{tabular}{c}
    $\genfrac{}{}{0pt}{2}{\square\blacksquare}{\blacksquare\blacksquare}\quad\genfrac{}{}{0pt}{2}{\blacksquare\blacksquare}{\square\blacksquare}\quad{ \square}$\\
    {$l=0,1,3$}\\
  \end{tabular} &
  1/4 & 1/4 & 1/4 & 1/4\\
  \hline
  \begin{tabular}{c}
    $\genfrac{}{}{0pt}{2}{\blacksquare\square}{\blacksquare\blacksquare}$\\
    $l=2$\\
  \end{tabular} &
  1/2 & 1/4 & 0 &1/4\\
  \hline
  \begin{tabular}{c}
    $\genfrac{}{}{0pt}{2}{\blacksquare\blacksquare}{\blacksquare\square}$ \\
    $l=4$\\
  \end{tabular} &
   0 & 1/4 & 1/2 & 1/4 \\
  \hline
\end{tabular}
\caption{The table shows the probabilities
$p^{l}_j$ for $l\in\{0,1,\ldots,4\}$ and $j\in\{1,\ldots,4\}$, which are used to choose an IFS in some square $Q$ based on the IFS already chosen in its left neighbor. \label{table1}}
\end{table}

Denote by $g_i:\R^2\to \R^2$, $i=1,\ldots,4$ the similarities
mapping the unit square $Q:=[0,1]^2$ (rotation and reflection free) to one of the four subsquares of sidelength $\frac 12$ as depicted in Figure~\ref{fig:Ex2} (left).
For $i=1,\ldots 4$, let $G_i:=\{g_j:j\neq i\}$ be the IFS consisting of the three mappings with indices different from $i$. That is, if the mappings of $G_i$ are applied to $Q$, then three subsquares are retained and the fourth, $g_i(Q)$, is discarded, see Figure~\ref{fig:Ex2} (right).
Our primary RIFS $\F$ is defined by choosing uniformly one of the $G_i$, $i=1,\ldots,4$. That is, each $G_i$ has  equal probability $\frac 14$. Note that UOSC \eqref{UOSC} is satisfied for the open set $O$ chosen to be the interior of $Q$.

We consider the (deterministic) tree
$$\Sigma_*:=\{1,2,3\}^{<\infty}$$
and we define the random labeled tree $\T$ by induction, using the notation
$$\T_n:=\{(\sigma,\F_\sigma):\, \sigma\in\Sigma_n\},\quad n=0,1,\dots $$
for the \emph{level $n$ of} $\T$.
We set $\F_\emptyset\ed\F$.
Given $n\in\N$, assume that the levels $\T_0,\dots,\T_{n-1}$ are given. Note that these levels of the tree determine the compact set
$$F_n:=\bigcup_{\sigma\in\Sigma_n}Q(\sigma),$$
where $Q(\sigma):=\bff_\sigma(Q)$, $\sigma\in\Sigma_*$, and vice versa: the random set $F_n$ determines $\T_0,\dots,\T_{n-1}$. So, given $F_n=K_n$ (for some deterministic configuration $K_n$), we define the `left-neighbor function' w.r.t.\ $K_n$
$$l_n:\Sigma_n\to\Sigma_n\cup\{0\}$$
so that if $\sigma\in\Sigma_n$ then $Q(l_n(\sigma))$ is the left adjacent square of $Q(\s)$ (in the lattice $2^{-n}{\mathbb Z}^2$), provided it lies in $K_n$. If the left adjacent square of $Q(\s)$ does not belong to $K_n$, then we set $l_n(\sigma):=0$.
We define the random IFS $\F_\sigma:\, \sigma\in\Sigma_n$ as a Markov chain (conditioned on $F_n=K_n$), with transition probabilities
\begin{align*}
&\P\left(\F_\sigma =G_k\mid \F_{l_n(\sigma)}=G_i,\,F_n=K_n\right)=:p^i_k, \\
&\P\left(\F_\sigma =G_k\mid l_n(\sigma)=0,\,F_n=K_n\right)=:p^0_k, \quad i,k\in\{1,2,3,4\},
\end{align*}
listed in Table~\ref{table1}.
This means that $\F_\sigma$ depends on $(\F_\tau:\tau<\sigma)$ only through its predecessor $l_n(\sigma)$ (here `$<$' is the lexicographic order of squares in the cubic lattice $2^{-n}{\mathbb Z}^2$). In fact, each horizontal line is a Markov chain, and the horizontal lines are mutually independent.
The distribution of $(\F_\sigma:\, \sigma\in\Sigma_n)$ under condition $F_n=K_n$ is then given by
\begin{equation}  \label{joint_distr}
    \P\left( (\F_\sigma=G_{\iota(\sigma)}:\, \sigma\in\Sigma_n)\mid F_n=K_n\right)=\prod_{\sigma\in\Sigma_n}p^{\iota(l_n(\sigma))}_{\iota(\sigma)},
\end{equation}
for any values $\iota(\sigma)\in\{1,2,3,4\}$, setting $\iota(0):=0$.

We shall show that our model is a random self-similar code tree satisfying the back path property. Since the number $N=3$ of branches is constant, the tree $\Sigma_*=\{1,2,3\}^{<\infty}$ is deterministic and (A1) is obviously fulfilled.
In order to verify (A2), we have to show that the tree $\T^{[i]}$ is independent of $f_i$ and has the same distribution as $\T$ for any $i\leq 3$.
Note that $f_i$ can take only two values, $g_i$ or $g_{i+1}$. In our situation, condition (A2) can  equivalently be expressed as the equality of distributions
$$
\P\left(\T^{[i]}\in (\cdot)\mid f_i=g_k\right)=\P\left(\T\in (\cdot)\right),\quad i=1,2,3,\quad k=i,i+1.
$$
Since the distribution of a labeled random tree is determined by the distribution of its root and the conditional distributions of its levels given the lower levels (see the above description for $\T$), (A2) will follow from the following two relations:
\begin{align}
    &\P(\F_i=G_j\mid f_i=g_k) = \P(\F=G_j), \label{A2-1}\\
    &\P\left( (\F_{i\sigma} = G_{\iota(\sigma)},\sigma\in\Sigma_n) \mid f_i=g_k,\, g_k(F_n)=g_k(K_n)\right) \label{A2-2}\\
    &\hspace{3cm}= \P\left( (\F_{\sigma} = G_{\iota(\sigma)},\sigma\in\Sigma_n) \mid F_n=K_n\right) ,\nonumber
\end{align}
for all $i=1,2,3$, $j,k=1,2,3,4$, $n\in\N$, $\iota:\Sigma_n\to\{1,2,3,4\}$ and possible realizations $K_n$ of $F_n$.
Property \eqref{A2-1} can be easily verified from the transition probabilities given in Table~\ref{table1}. As concerns \eqref{A2-2}, the right-hand side is given by \eqref{joint_distr}. We can apply \eqref{joint_distr} also for the left-hand side where, however, only selected IFS from the tree $\T$ at level $n+1$ are considered. Assume first that $k=1$ or $3$. Then, the subsquare $g_k(Q)$ of $Q$ lies in the left half of $Q$, and the key observation in this case is that the corresponding left neighbor functions are the same, i.e.,
$$l_{n+1}(i\sigma)=l_n(\sigma),\quad \sigma\in\Sigma_n$$
($l_n$ is defined w.r.t.\ $K_n$, whereas $l_{n+1}$ w.r.t.\ $g_k(K_n)$). Thus, we obtain the same expression using \eqref{joint_distr} also for the left-hand side of \eqref{A2-2}. In the case $k=2$ or $4$, we can apply the time reversal for the Markov chains in the lines (so that the lines will be ordered from the right to the left). One easily finds that this reversed Markov chain will have transition probabilities
\begin{align*}
p^k_i&=\P\left(\F_\sigma =G_k\mid \F_{r_n(\sigma)}=G_i,\, F_n=K_n\right), \\
p^0_k&=\P\left(\F_\sigma =G_k\mid r_n(\sigma)=0,\, F_n=K_n\right), \quad i,k\in\{1,2,3,4\},
\end{align*}
where $r_n$ denotes the right-neighbor function of $\Sigma_n$ w.r.t. $K_n$. Then, we can use the same argument as in the first case and verify so \eqref{A2-2}.

In contrast to the homogeneous or $V$-variable model, the different levels of this random labeled tree are \emph{not} independent. Indeed, consider the conditional probabilities
$$w_k:=\P\left(\F_1=G_1,\F_2=G_2,\F_3=G_3\mid \F_\emptyset=G_k\right).$$
If $k=1$ or $2$ then the configuration $G_2$ will lie left to $G_3$ in the bottom line, hence, $w_1=w_2=0$. If, however, $k=3$ or $4$, these two configurations will appear in different lines and we get $w_3=w_4=4^{-3}$, using Table~\ref{table1}.

\medskip

\end{ex}

\section{Markov stops and statement of the main results}
\label{sec:main_res}

We will use some familiar notions in the code space $\St$. By definition, for $\s\in\St$, the \emph{length} $|\s|$ is $n$, if $\s\in\Sn$, and
$\s|n$ is the \emph{restriction} of $\s$ to the first $n$ components, if $|\s|\geq n$. If $\s\in\St$ and $\tau\in\St^{[\s]}$, then $\s\tau\in\St$ is the concatenation of these codes.

Recall from \eqref{eq:bfs} the definition of the mappings $\bff_\s$ with contraction ratios $\bfr_\s$. By induction we infer from the stochastic self-similarity \eqref{self-sim} that a.s.
\begin{\eq}\label{sss-n}
F=\bigcup_{\s\in\Sn} \bff_\s(F^{[\s]})\,,\,n\in\N\,,\quad\text{ and}\quad
F=\bigcup_{\s\in\St} \bff_\s(F^{[\s]})\,,
\end{\eq}
where $\bff_\emptyset$ means the identity.
Recall from (A2) that under the condition $\s\in\Sn$ ($\s\in\St$, resp.) the random set $F^{[\s]}$ is independent of the random mapping $\bff_\s$ and has the same distribution as $F$. We will use the abbreviations
$$F_\sigma:=\bff_\sigma(F^{[\s]}),\quad \sigma\in\Sigma_*,$$
(including $F_\emptyset=\bff_\emptyset(F)=F$) so that \eqref{sss-n} becomes
$$
F=\bigcup_{\s\in\Sn} F_\sigma\,,\,n\in\N\,,\quad\text{ and}\quad
F=\bigcup_{\s\in\St} F_\sigma .
$$
For a boundedness condition in the application of the Renewal theorem below we will further use a formula similar to \eqref{sss-n} with respect to some Markov stop: Fix an arbitrary constant $R>\sqrt{2}\,|O|$ for the set $O$ from UOSC \eqref{UOSC} and define for all $0<r<R$ a random subset of codes by
\begin{\eq}\label{subtree-r}
\Si(r):=\left\{\s\in\St: R\,\bfr_\s\le r<R\,\bfr_{\s\mid|\s|-1}\right\},
\end{\eq}
where, by convention, $\bfr_{\s||\s|-1}=1$, if $|\s|=1$. It is convenient to set
$$
\Si(r):=\Si_0=\{\emptyset\} \quad\text{ for } r\geq R.
$$
Recalling that $\bff_\emptyset$ is the identity, we have, for any $r>0$,
\begin{\eq}\label{ssstop}
F=\bigcup_{\s\in\Si(r)} F_\s, \qquad \P\, \text{-a.s.}.
\end{\eq}
In order to formulate our main result in the most general form we also need the following random sets of \emph{boundary codes}, i.e., codes $\s\in\Si(r)$ for which the parallel set
$F_\s(r)$ has distance less than $r$ to the {\it boundary} of the first iterate
\begin{equation}  \label{fO}
\mathfrak{f}(O):=\bigcup_{i=1}^Nf_i(O)
\end{equation}
of the basic open set $O$ under the random similarities:
\begin{\eq}\label{btree}
\Sb(r):= \left\{\s\in\Si(r): F_\s(r)\cap(\mathfrak{f}(O)^c)(r)\ne\emptyset\right\},
\end{\eq}

Recall now that $1<\E N<\infty$. Let $D$ be the number determined by
\begin{\eq}\label{minkdim}
\E\sum_{i=1}^N r_i^D = 1\, .
\end{\eq}
Note that UOSC \eqref{UOSC} implies $D\le d$. The measure
\begin{\eq}\label{eq:mu-def}
\mu(\cdot):= \E \sum_{i=1}^N \1_{(\cdot)}(|\ln r_i|)\,r_i^D
\end{\eq}
is an associated probability distribution for the logarithmic contraction ratios $r_i$ of the primary random IFS $\F$. The corresponding mean value is denoted by
\begin{\eq}\label{def:eta}
\eta:=\E\sum_{i=1}^N |\ln r_i|\,r_i^D.
\end{\eq}
Under conditions (A1) and (A2), equation \eqref{minkdim} can be extended to any finite $n$ and to the Markov stops \eqref{subtree-r} as follows.
\begin{props}\label{dim_Markov stop}
Let $\T$ be a random self-similar code tree with back path independence (as in Definition~\ref{def:codetree}). Then, for any $n\in\N$ and any $r>0$,
$$
\E \sum_{\s\in\Sn}(\bfr_\s)^D=1 \quad \text{ as well as } \quad
\E \sum_{\s\in\Si(r)}(\bfr_\s)^D=1.$$
\end{props}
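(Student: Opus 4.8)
The plan is to prove the two identities by induction on the number of construction steps, using the defining relation \eqref{minkdim} together with the back path independence (A2), and then to deduce the statement for the Markov stops by a limiting/martingale argument. First I would establish $\E\sum_{\s\in\Sn}(\bfr_\s)^D=1$ by induction on $n$. The case $n=1$ is exactly \eqref{minkdim}, since $\bff_i=f_i$ and $\bfr_i=r_i$ for $\s=i\in\Sigma_1$. For the inductive step, write $\bfr_{\s_1\ldots\s_n}=r_{\s_1}\cdot\bfr_{\s_1\ldots\s_n}^{[\s_1]}$, i.e.\ split off the first coordinate and recognize the remaining product as the corresponding quantity for the shifted tree $\T^{[\s_1]}$. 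Concretely,
\begin{equation*}
\sum_{\s\in\Sn}(\bfr_\s)^D = \sum_{i=1}^N r_i^D \sum_{\tau\in\Sigma_{n-1}^{[i]}}\bigl(\bfr_\tau^{[i]}\bigr)^D,
\end{equation*}
where $\Sigma_{n-1}^{[i]}$ denotes level $n-1$ of the shifted tree. Taking expectations and applying \eqref{cond_exp_equation} (with $g(\r)=\r^D$, $X=\sum_{\tau\in\Sigma_{n-1}}(\bfr_\tau)^D$, which is a functional of $\T^{[i]}$) together with the induction hypothesis $\E X=1$ gives $\E\sum_{\s\in\Sn}(\bfr_\s)^D=\E\sum_{i=1}^N r_i^D\cdot 1=1$. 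One has to be a little careful to handle the event $i\le N$ correctly and to note that the sum over the (possibly empty) shifted level is interpreted as $0$ when $i>N$; the indicator $\1\{i\in\Sigma_1\}$ in \eqref{cond_exp_equation} takes care of exactly this.

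For the Markov stop, I would first observe that $M_n:=\sum_{\s\in\Sn}(\bfr_\s)^D$ is a nonnegative martingale with respect to the filtration $(\G_n)$ generated by the first $n$ levels of $\T$ (this is essentially the computation above applied conditionally on $\G_{n-1}$, using that the shifted subtrees at level $n-1$ are distributed like $\T$ and their relevant functionals have conditional mean $1$ by (A2)). The cut $\Sigma(r)$ is a stopping line: for each infinite path, exactly one of its vertices lies in $\Sigma(r)$, and $\Sigma(r)\subset\bigcup_{n\le n_0}\Sigma_n$ for $n_0=n_0(r)$ determined by $r_{\min}$ (since $\bfr_\s\le r_{\min}^{|\s|}$... more precisely $\bfr_\s\in[r_{\min}^{|\s|},r_{\max}^{|\s|}]$, so the length of any $\s\in\Sigma(r)$ is bounded in terms of $r$, $R$, $|O|$, $r_{\min}$ and $r_{\max}$). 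Hence $\Sigma(r)$ is a bounded stopping line, and by the optional stopping theorem for the stopped sum along stopping lines — or, more elementarily, by a direct finite telescoping argument splitting $\Sigma_{n_0}$ according to which vertex on the path to a given leaf belongs to $\Sigma(r)$ and using the martingale property level by level — one gets $\E\sum_{\s\in\Sigma(r)}(\bfr_\s)^D = \E M_0 = 1$.

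The main obstacle I anticipate is purely bookkeeping rather than conceptual: making the telescoping/optional-stopping argument for $\Sigma(r)$ rigorous in the presence of the random, dependent tree structure, and in particular justifying the interchange of $\E$ and the (finite but random-length) sums while correctly conditioning on the events $\s\in\Sn$. The key technical tool throughout is \eqref{cond_exp_equation} (equivalently item (ii)--(iii) of the Remark after Definition~\ref{def:codetree}), which decouples the contraction ratios $\bfr_\s$ along a path from the subtree hanging below; the fact that subtrees attached to different $i$'s may be \emph{dependent} is irrelevant here because we only ever take expectations of a single additive functional and never need joint independence across siblings. One should also record that all sums involved are finite a.s.\ (since $N<\infty$ a.s.\ and the tree has bounded depth up to the cut $\Sigma(r)$), so no integrability subtleties arise beyond $\E N<\infty$, which is assumed.
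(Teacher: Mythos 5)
Your proof of the first identity is correct and essentially the paper's argument in mirror image: you split off the first coordinate and use the shifted tree, the paper splits off the last level; both steps rest on \eqref{cond_exp_equation} together with \eqref{minkdim}, so nothing is lost there.

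The second part has a genuine gap: $M_n=\sum_{\s\in\Sn}(\bfr_\s)^D$ is in general \emph{not} a martingale with respect to the filtration $(\G_n)$ generated by the first $n$ levels of $\T$. The martingale property would require $\E\bigl[\sum_{i\le N_\s}(r_{\s i})^D\mid\G_n\bigr]=1$ a.s.\ for each $\s\in\Sn$, i.e.\ that the label $\F_\s$ be conditionally mean-preserving given \emph{all} labels of the earlier levels. Assumption (A2) decouples the subtree $\T^{[\s]}$ only from the maps along the back path of $\s$; it says nothing about sibling labels or about labels in other branches at earlier levels, and the model explicitly allows such dependencies. Concretely, in Example~\ref{ex:3} one has $\F_{21}=\F_{1}$, so $\E\bigl[\sum_{i}(r_{21i})^D\mid\G_2\bigr]=\sum_{i}(r_{1i})^D$, which is not a.s.\ equal to $1$, and hence $\E[M_3\mid\G_2]\neq M_2$ in general. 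This undercuts both of your routes as you justified them: optional stopping along the stopping line $\Sigma(r)$, and the telescoping argument insofar as it ``uses the martingale property level by level''. The statement itself is safe because only unconditional expectations are needed, and this is exactly how the paper proceeds: choose $m$ so large that every word of $\Sigma(r)$ has length at most $m$, write each $\s\in\Sigma_m$ uniquely as $\s=\s'\s''$ with $\s'\in\Sigma(r)$, $|\s'|=n$, and $\s''$ in level $m-n$ of $\T^{[\s']}$, observe that the indicator $\1\{\s'\in\Sigma(r),\,|\s'|=n\}$ is a function of the event $\s'\in\Sn$ and of the back-path contraction ratios only, and then apply \eqref{cond_exp_equation_1}/\eqref{cond_exp_equation} to factor out $\E\sum_{\s''\in\Sigma_{m-n}}(\bfr_{\s''})^D=1$ (known from the first part); summing over $n$ gives $1=\E\sum_{\s\in\Sigma_m}(\bfr_\s)^D=\E\sum_{\s\in\Sigma(r)}(\bfr_\s)^D$. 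If you replace every appeal to a conditional martingale property by this back-path decoupling, your telescoping version becomes a complete proof.
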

\begin{proof}
For $n=1$ the first equation is given by \eqref{minkdim}. The general case follows by induction: Assume that it is true for some $n\in\N$. Then we get
\begin{align*}
\E\sum_{\s'\in\Si_{n+1}}(\bfr_{\s'})^D
&=\E\sum_{\s\in\Sn}(\bfr_\s)^D\sum_{i=1}^{N_{\s}}(r_{\s i})^D
=\E\sum_{\s\in\Sn}(\bfr_\s)^D\,\E\sum_{i=1}^N r_i^D\\
&=\E\sum_{\s\in\Sn}(\bfr_\s)^D=1,
\end{align*}
by the induction assumption. In the second equality we have used \eqref{cond_exp_equation} with $X:=\sum_{i=1}^Nr_i^D$ and $g(\rho):=\rho^D$. For the third equality, the case $n=1$, i.e.~\eqref{minkdim}, is used. This proves the first assertion.

The arguments for the second one are similar. For fixed $r>0$ choose $m\in\N$ large enough so that $(r_{\max})^m\le r$. By means of the corresponding conditional expectations  and the subtree condition \eqref{cond_exp_equation} for $X:=\sum_{\sigma''\in\Sigma_{m-n}}(\bfr_{\sigma''})^D$, we infer from the first statement
\begin{align*}
1&=\E\sum_{\s\in\Si_m}(\bfr_\s)^D\\
&=\E\bigg(\sum_{n=1}^{m-1}\sum_{\substack{\s'\in\Si(r)\\ |\s'|=n}} (\bfr_{\s'})^D \sum_{\s''\in\Si_{m-n}^{[\sigma']}}\left( \bfr_{\s''}^{[\sigma']}\right)^D\bigg)
+\E\sum_{\substack{\s'\in\Si(r)\\|\s'|=m}}(\bfr_{\s'})^D\\
&=\E\bigg(\sum_{n=1}^{m-1}\sum_{\substack{\s'\in\Si(r)\\|\s'|=n}}(\bfr_{\s'})^D~
\E\big(\sum_{\s''\in\Si_{m-n}}(\bfr_{\s''})^D\big)\bigg)
+\E\sum_{\substack{\s\in\Si(r)\\|\s|=m}}(\bfr_{\s})^D\\
&=\E\bigg(\sum_{n=1}^{m-1}\sum_{\substack{\s'\in\Si(r)\\|\s'|=n}}(\bfr_{\s'})^D\bigg)+\E\sum_{\substack{\s\in\Si(r)\\|\s|=m}}(\bfr_{\s})^D=\E\sum_{\s\in\Si(r)}(\bfr_{\s})^D\,,
\end{align*}
which proves the second assertion.
\end{proof}
For our main result we need a slightly stronger condition than UOSC \eqref{UOSC}, namely the
{\it Uniform Strong Open Set Condition} (USOSC). It is satisfied, if
\begin{\eq}\label{USOSC}
\mbox{UOSC holds for some $O$ such that}~~ \P(F\cap O\ne\emptyset)>0\, .
\end{\eq}
(This inequality is equivalent to the commonly used condition
$\P(F\cap O\neq \emptyset)=1$, see \cite[Remark~3.1]{RWZ23}.)

By Lemma~\ref{lems:large distances}, the boundary $\partial F(r)$ of the parallel set $F(r)$ is $\P$-a.s.\ a $(d-1)$-Lipschitz manifold and $\widetilde{F(r)}$ has positive reach for all $r\geq R$, where $R>\sqrt{2}\,|O|\geq\sqrt{2}\,|F|$ is the constant from \eqref{subtree-r}. For $r<R$ one needs to impose some regularity assumption in general.
Recall that $\K$ denotes the family of all nonempty compact subsets of $\R^d$ and $\Reg$ is the set of all pairs $(r,K)$ such that $r>0$, $K\in\K$ and $r\not\in\crit K$ (i.e., $r$ is a regular value of the distance function $d_K$).
We shall call a random compact set $F$ \emph{regular} if
the set of all critical values of its distance function has Lebesgue measure zero almost surely, i.e., if
\begin{equation}\label{regular SSRS}
\cL(\crit F)=0\quad \text{a.s.}
\end{equation}
If $F$ is a regular random fractal set defined by means of \eqref{F}, we also consider the random set $\reg_*F\subset (0,\infty)$ defined by
\begin{\eq}\label{Reg*}
\reg_*F:=\left\{r>0: (r,F_\s)\in \Reg \text{ for all } \sigma\in\Sigma_{*}\right\}\, .
\end{\eq}
Recall that $\St=\bigcup_{n=0}^\infty\Si_n$
and note that
$(0,\infty)\setminus\Reg_*=\bigcup_{\sigma\in\Sigma_*}\crit F_\sigma$ and
$\crit F_\sigma=\bfr_\s\crit F^{[\sigma]}$, $\sigma\in\Sigma_*$. Hence,
\begin{align*}
    \E\cL\left((0,\infty)\setminus\reg_*F\right)
    &\leq\E \sum_{\sigma\in\Sigma_*}\cL(\crit F_\sigma)\\
    &\leq\sum_{n=0}^\infty\sum_{\sigma\in\N^n}\E\left(\1\{\s\in\Sn\}\bfr_\s\cL(\crit F^{[\sigma]})\right)\\
    &=\sum_{n=1}^\infty\sum_{\sigma\in\N^n}\E\left(\1\{\s\in\Sn\}\bfr_\s\right)\, \E\cL(\crit F)
    =0
\end{align*}
by \eqref{regular SSRS} (we have used \eqref{cond_exp_equation} with $X=\cL(\crit F)$). Thus we have
\begin{equation}\label{Reg*-0}
\cL\left((0,\infty)\setminus\reg_*F\right)=0\quad\text{ a.s.}
\end{equation}

Now we can formulate our main result. In the sequel, the occurring essential limits and suprema are always meant with respect to
Lebesgue-a.a.\ arguments. (Recall the notations $F_\s=\bff_\sigma(F^{[\sigma]})$, $\bff_\sigma=f_{\sigma_1}\circ f_{\sigma_1\sigma_2}\circ\dots\circ f_{\sigma_1\ldots\sigma_n}$, for $\s=\s_1\dots\s_n\in\Sn$, and, in particular, $\bff_i=f_i$ for $n=1$. Recall $\mu$ and $\eta$ from \eqref{eq:mu-def} and \eqref{def:eta}. The  special code sets $\Sigma(r)$ and $\Sigma_b(r)$, $r>0$, are defined in \eqref{subtree-r} and \eqref{btree}, resp.) We will also use the short notation $O_\sigma:=\bff_\sigma(O)$, $\sigma\in\Sigma_*$.

\begin{thms}\label{maintheorem}
Let $k\in\{0,1,\ldots,d\}$ and let $F$ be a random self-similar code tree fractal  in $\R^d$ (defined in \eqref{F}) satisfying the back path property (cf.\ Def.~\ref{def:codetree}) and the Uniform Strong Open Set Condition \eqref{USOSC} with basic set $O\subset\R^d$. Let $R>\sqrt{2}|O|$. For $k\le d-2$ we additionally suppose the following:
\begin{itemize}
\item[{\rm (i)}] if $d\geq 4$, then $F$ is regular in the sense of \eqref{regular SSRS},
\item[\rm (ii)] there exists a constant $c_k>0$ such that with probability one,
$$C_k^{\var}\left(F(r),O_\sigma(r)\cap O_\tau(r)\right)\leq c_kr^k$$
for a.a. $r>0$ and all $\sigma,\tau\in\Sigma(r)$ with $\sigma\neq\tau$.
\end{itemize}

Set for almost all $r>0$,
$$R_k(r):=\E C_k(F(r))-\E\sum_{i=1}^N\1_{(0,R r_i]}(r)\, C_k\big(F_i(r)\big)\,.$$
Then we get the following:
\begin{itemize}
\item[{\rm (I)}] If the measure $\mu$ is non-lattice, then
$$C_{k,F}^{\fr}:=\elim_{\ep\rightarrow 0}\limits\ep^{D-k}\E\,C_k(F(\ep))=\frac{1}{\eta}\int_0^{R} r^{D-k-1}R_k(r)\, dr\, .$$

\item[{\rm (II)}] If the measure $\mu$ is lattice with constant $c$, then for almost all \ $s\in[0,c)$
  $$\lim_{n\rightarrow\infty}e^{(k-D)(s+nc)}\E\,C_k\big(F(e^{-(s+nc)})\big)=\frac{1}{\eta}\sum_{m=0}^\infty e^{(k-D)(s+mc)}R_k\big(e^{-(s+mc)}\big).
  $$
	
\item[\rm{(III)}] In general,
$$\overline{C}_{k,F}^{\fr}:=\lim_{\delta\rightarrow 0}\frac{1}{|\ln\delta|}\int_\delta^1\ep^{D-k}\E\, C_k(F(\ep))~\ep^{-1}d\ep = \frac{1}{\eta}\int_0^{R} r^{D-k-1}R_k(r)\, dr.$$
\end{itemize}
\end{thms}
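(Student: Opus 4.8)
The plan is to establish a renewal equation for the function $\varepsilon\mapsto\E C_k(F(\varepsilon))$ and then apply the Renewal Theorem in its three forms (non-lattice, lattice, averaged). First I would use the self-similarity decomposition with respect to the Markov stop: by \eqref{ssstop} we have $F=\bigcup_{\sigma\in\Sigma(r)}F_\sigma$ $\P$-a.s., and the strategy is to peel off only the first level, i.e.\ write $F=\bigcup_{i=1}^N F_i$ with $F_i=f_i(F^{[i]})$. Using inclusion–exclusion, or rather the additivity of curvature measures together with the locality property, one expresses $C_k(F(\varepsilon))$ via the contributions $C_k(F_i(\varepsilon))$ on the pieces $O_i(\varepsilon)$ and correction terms supported on the overlaps $O_\sigma(\varepsilon)\cap O_\tau(\varepsilon)$. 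Taking expectations and using the key identity \eqref{cond_exp_equation} (with $X=C_k(F(\cdot))$, exploiting that under $i\le N$ the set $F^{[i]}$ has the same law as $F$ and is independent of $f_i$, with $r_i$ the scaling factor), together with the homogeneity $C_k(rA,rB)=r^kC_k(A,B)$, one derives for almost all $\varepsilon>0$
\begin{equation*}
\E C_k(F(\varepsilon))=\E\sum_{i=1}^N\1_{(0,Rr_i]}(\varepsilon)\,r_i^k\,\E C_k\bigl(F(\varepsilon/r_i)\bigr)+\text{(bounded remainder)}+R_k(\varepsilon)\cdot(\ldots),
\end{equation*}
which after the substitution $z(t):=e^{(k-D)t}\E C_k(F(e^{-t}))$ becomes a genuine renewal equation $z=z\ast\mu+\psi$ with step distribution $\mu$ from \eqref{eq:mu-def} (this is a probability distribution by \eqref{minkdim}) and a directly Riemann integrable forcing term $\psi$ built from $R_k$.

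The second block of steps is to verify the hypotheses of the Renewal Theorem, which splits into two parts. One must show the forcing term $\psi$ is directly Riemann integrable (in the non-lattice case) and that $\int_0^\infty|\psi|<\infty$; here is where Lemma~\ref{lems:large distances} enters decisively, giving the uniform bound $C_k^{\var}(A(r),\R^d)\le c_k(R)\,r^k$ for $r\ge R|A|$, so that the ``tail'' contributions from large parallel radii relative to each cylinder are controlled, and condition (ii) of the theorem handles the overlap corrections on $O_\sigma(r)\cap O_\tau(r)$. The mean value $\eta$ from \eqref{def:eta} is finite since $r_{\min}\le r_i\le r_{\max}<1$ and $\E N<\infty$, so $\mu$ has finite first moment. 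The USOSC \eqref{USOSC} is needed to guarantee $R_k(r)$ is not identically zero and, more importantly, that the renewal-theoretic limit is the ``correct'' one (this is the standard role of the strong open set condition—ensuring the geometry of $F$ genuinely fills the reference set $O$). In dimension $d\ge 4$ regularity \eqref{regular SSRS} via \eqref{Reg*-0} ensures that almost every $r$ is simultaneously a regular value for all $F_\sigma$, so the curvature measures $C_k(F_\sigma(r),\cdot)$ are well-defined and one may invoke the weak continuity Lemma~\ref{L_cont}; in dimensions $2$ and $3$ this is automatic since $\crit F$ is a null set, and for $k\in\{d-1,d\}$ no regularity is needed by the Remark after Lemma~\ref{L_cont}.

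With the renewal equation and its hypotheses in hand, the three conclusions follow from the corresponding three versions of the Renewal Theorem: the non-lattice Blackwell/Feller form gives (I) with limit $\frac{1}{\eta}\int_0^\infty\psi(t)\,dt=\frac{1}{\eta}\int_0^R r^{D-k-1}R_k(r)\,dr$ after changing variables back; the lattice version gives (II) with the sum over the arithmetic progression; and the Cesàro/averaged version—which requires only $\int|\psi|<\infty$ and no (non-)lattice hypothesis—gives (III). The translation of the renewal limit $\int\psi$ back into the stated integral $\int_0^R r^{D-k-1}R_k(r)\,dr$ is a routine computation using the definition of $R_k$ and the substitution $r=e^{-t}$.

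The main obstacle, I expect, is the derivation of the renewal equation itself: making the inclusion–exclusion over the first-level pieces rigorous at the level of signed curvature measures (not just the Minkowski/volume case $k=d$), justifying the use of locality and additivity on the overlap sets, and showing that the resulting remainder term—after taking expectations and rescaling—is directly Riemann integrable. This is precisely where assumption (ii) on the variation measure of overlaps is consumed, and where the measurability infrastructure recalled in Section~\ref{Preliminaries} (Borel-ness of $\Reg$, measurability of $K\mapsto C_k(K,B)$) is needed to even write down the expectations. The boundary code sets $\Sigma_b(r)$ from \eqref{btree} should enter here to isolate exactly those cylinders whose parallel sets can ``see'' the boundary of $\mathfrak f(O)$ and hence contribute to $R_k$. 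The passage $k=d$ versus $k\le d-1$ versus $k\le d-2$ will require separate care because of the differing regularity needs, but the renewal-theoretic skeleton is uniform across all these cases.
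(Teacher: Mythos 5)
Your skeleton --- peel off the first level, substitute $t=\ln(R/r)$ to get a renewal equation for $e^{(k-D)t}\E C_k(F(Re^{-t}))$ with step distribution $\mu$, then invoke the non-lattice, lattice and averaged forms of the renewal theorem --- is exactly the paper's route (the paper works with $\xi_k(r)=\1_{(0,R]}(r)C_k(F(r))-\sum_i\1_{(0,Rr_i]}(r)C_k(F_i(r))$, so that $R_k=\E\xi_k$, and uses (A2) through \eqref{cond_exp_equation_1} to turn the pathwise identity $Z_k(t)=\sum_i r_i^D Z_k^{[i]}(t-|\ln r_i|)+z_k(t)$ into a Feller renewal equation). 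However, there is a genuine gap precisely at the point you flag as the main obstacle: you only argue that the remainder is \emph{bounded} (via Lemma~\ref{lems:large distances} and hypothesis (ii)), and boundedness is not enough. One needs the quantitative decay $\E|\xi_k(r)|\le a_k r^{k-D+\delta}$ for some $\delta>0$ (\eqref{bound_2}), i.e.\ $\E|z_k(t)|\le a_k' e^{-\delta t}$, to get direct Riemann integrability and even $\int_0^\infty|\E z_k|\,dt<\infty$ for (I)--(III). In the paper this comes from the counting estimate $\sup_{0<r<R}\E\, r^{D-\delta}\sharp(\Sb(r))<\infty$ (Lemma~\ref{L_2}): the overlap and boundary contributions to $\xi_k(r)$ are localized on the boundary cylinders $\Sb(r)$, each contributing $O(r^k)$ by Lemma~\ref{lems:large distances} and (ii), and the essential point is that $\sharp\Sb(r)$ grows strictly slower than $r^{-D}$. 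This is exactly where the USOSC is consumed --- via the codes $\tau\in\Sigma(\rho,\alpha)$ with $O_\tau(\alpha)\subset O$ it produces the exponent gap $\delta$ --- not, as you suggest, merely to ensure $R_k\not\equiv 0$ or that the renewal limit is the ``correct'' one. Without such a counting lemma your argument does not close: with $\sharp\Si(r)\asymp r^{-D}$ alone the forcing term is only bounded and the renewal theorem is not applicable.

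A second, smaller gap concerns the almost-everywhere nature of the construction for $k\le d-2$: the decomposition $Z_k(t)=\sum_i r_i^DZ_k^{[i]}(t-|\ln r_i|)+z_k(t)$ and the resulting renewal equation hold only for $t$ in the full-measure set $T$ of \eqref{def_T} (those $t$ with $Re^{-t}\in\reg_*F$ a.s.). The paper needs Lemma~\ref{L_1} (that $T$ is Lebesgue-full and stable under convolution with $\mu$ and $U$), the auxiliary functions $\overline z_k,\overline Z_k$ extending $\E z_k$ off $T$, the local bound \eqref{locbound} together with Lemma~\ref{L_cont} to obtain a.e.\ continuity of $\overline z_k$, and Asmussen's criterion to conclude direct Riemann integrability; your proposal cites Lemma~\ref{L_cont} only in connection with well-definedness and does not explain how an a.e.-defined equation is upgraded to one to which Feller's theorem applies. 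Finally, note that the paper deduces (III) from (I)/(II) together with the bound \eqref{expectation bound} on finite intervals, rather than from a separate averaged renewal theorem requiring only integrability of the forcing term; your variant could be made to work, but as stated it again presupposes the missing decay estimate.
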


\begin{rems}\label{rem:partial}
As in \cite{RWZ23}, assumption (ii) can be replaced by the following less restrictive pair of assumptions:
\begin{itemize}
\item[{\rm (ii')}]
for all $r_0\in(0,R)$,
$$\E\esup_{r_0\le r\le R}\limits\,\max_{\s\in\Sb(r)}C_k^{\var}\bigg(F(r),\partial\big(F_\sigma(r)\big)\cap\partial\big(\bigcup_{\substack{\tau\in\Si(r),\\ \tau\ne\s}} F_\tau(r)\big)\bigg)<\infty\, ,$$
\item[{\rm (iii')}]
there is a constant $C>0$ such that with probability 1,
$$\E\bigg[\max_{\s\in\Sb(r)}\, r^{-k}\,C_k^{\var}\bigg(F(r),\partial \big(F_\sigma(r)\big)\cap\partial\big(\bigcup_{\substack{\tau\in\Si(r),\\ \tau\ne\s}} F_\tau(r)\big)\bigg)\bigg|\sharp(\Sb(r))\bigg]\le C$$
for Lebesgue almost all $r\in(0,R]$.
\end{itemize}
Moreover, in conditions (ii') and (iii'), the boundary signs $\partial$ can be omitted, since $\Int F_\sigma(r)\subset\Int F(r)$ for any $\sigma\in\St$ and the curvature measures are concentrated on the boundary of the set $F(r)$. The proof with these more general assumptions would follow the same lines as that of \cite[Theorem~3.2]{RWZ23}. Unfortunately, we do not know any example of a model where (ii) does not hold, but (ii') and (iii') hold.
\end{rems}

\begin{thms}\label{positive}
Under the conditions of Theorem \ref{maintheorem} we have
$$\liminf_{\varepsilon\to 0} \varepsilon^{d-D} \mathbb{E}\mathcal{L}^d(F(\varepsilon))>\,0.$$
This implies in particular $\overline{C}_{d,F}^{\fr}>0$ and, in case it exists, $C_{d,F}^{\fr}>0$. Moreover, if $D<d$, then
$$\overline{C}_{d,F}^{\fr}=\frac{2}{d-D}\overline{C}_{d-1,F}^{\fr}.$$
\end{thms}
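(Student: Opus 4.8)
My plan is to establish the three assertions in sequence, since the second and third follow easily once the $\liminf$ bound is in hand. For the positivity of $\liminf_{\ep\to0}\ep^{d-D}\E\mathcal{L}^d(F(\ep))$, I would use the Markov-stop decomposition \eqref{ssstop}: for each $r>0$, $F=\bigcup_{\s\in\Si(r)}F_\s$, and hence $F(r)\supseteq F_\s(r)$ for any single $\s\in\Si(r)$. The key geometric observation is that by USOSC, with positive probability the fractal meets $O$, and then for a suitably chosen code $\s$ the parallel set $F_\s(r)$ contains a ball of radius comparable to $\bfr_\s\,r_{\min}\ldots$ — more precisely, since $\bfr_\s\asymp r/R$ for $\s\in\Si(r)$, the set $F_\s(r)\supseteq\bff_\s(O)$ contains a ball whose radius is bounded below by $c\cdot r$ for a deterministic constant $c>0$ (using that $O$ is open nonempty and $\bff_\s$ is a similarity of ratio $\asymp r$). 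Thus $\mathcal{L}^d(F(r))\geq c'\,r^d$ on an event of probability bounded below, giving $\E\mathcal{L}^d(F(r))\geq c''\,r^d$; combined with $r^{D}\cdot r^{-D}$ bookkeeping this yields $\ep^{d-D}\E\mathcal{L}^d(F(\ep))\geq\ep^{d-D}\cdot c''\ep^d\cdot\ep^{-d}\cdot\ldots$ — wait, this needs care: one actually wants a lower bound of order $\ep^{D-d}$ on $\E\mathcal{L}^d(F(\ep))$. The right way is to sum over \emph{all} $\s\in\Si(r)$ using disjointness of the $\bff_\s(O)$ (from UOSC), so that $\mathcal{L}^d(F(r))\geq\sum_{\s\in\Si(r)}\mathcal{L}^d(\bff_\s(O)\cap F(r))$, and on the event $\{F\cap O\neq\emptyset\}$ a positive fraction of the cylinders contribute a term $\asymp(\bfr_\s r)^d\asymp(\bfr_\s)^d r^d$; taking expectations and invoking Proposition~\ref{dim_Markov stop} (which gives $\E\sum_{\s\in\Si(r)}(\bfr_\s)^D=1$) together with $(\bfr_\s)^d=(\bfr_\s)^{d-D}(\bfr_\s)^D\geq(r/(Rr_{\max}^{-1}))^{d-D}\cdots$ — here one uses $\bfr_\s\geq c\,r$ for $\s\in\Si(r)$ — produces $\E\mathcal{L}^d(F(r))\geq c\,r^{d}\cdot r^{D-d}=c\,r^{D}$. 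Hmm, that gives $\ep^{d-D}\E\mathcal{L}^d(F(\ep))\geq c\,\ep^{d}$, which is wrong.

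Let me restate the correct chain: for $\s\in\Si(r)$ we have $R\bfr_\s\leq r$, i.e. $\bfr_\s\leq r/R$, and also $r<R\bfr_{\s\mid|\s|-1}\leq R\bfr_\s/r_{\min}$, so $\bfr_\s\geq r_{\min}r/R$; thus $\bfr_\s\asymp r$ with deterministic constants. Each $\bff_\s(O)$ contains a ball of radius $\rho_O\bfr_\s\geq\rho_O r_{\min}r/R=:cr$ (where $\rho_O$ is the inradius of $O$), and these balls are pairwise disjoint. On $\{F\cap O_\s\neq\emptyset\}$ (which, by USOSC and the self-similar structure, occurs with probability at least $p_0>0$ conditional on $\s\in\Si(r)$), the set $F(r)$ contains that entire ball, contributing $\kappa_d(cr)^d$ to $\mathcal{L}^d(F(r))$. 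Summing: $\E\mathcal{L}^d(F(r))\geq\kappa_d(cr)^d\,\E\sharp\{\s\in\Si(r):F\cap O_\s\neq\emptyset\}\geq\kappa_d(cr)^d p_0\,\E\sharp\Si(r)$. Finally $\E\sharp\Si(r)\geq\E\sum_{\s\in\Si(r)}(\bfr_\s/(r/R))^{\,?}$ — the cleanest route is: $\sharp\Si(r)\geq\sum_{\s\in\Si(r)}(R\bfr_\s/r)^D\geq(R/r)^D\sum_{\s\in\Si(r)}(\bfr_\s)^D\cdot r_{\min}^{D}\cdots$; wait, since $\bfr_\s\leq r/R$ we have $(R\bfr_\s/r)^D\leq1$, so that inequality goes the wrong way. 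Instead use $\bfr_\s\geq r_{\min}r/R$, giving $\sum_{\s\in\Si(r)}(\bfr_\s)^D\leq\sharp\Si(r)\cdot(r/R)^D$, hence $\sharp\Si(r)\geq(R/r)^D\sum_{\s\in\Si(r)}(\bfr_\s)^D$; taking expectations and Proposition~\ref{dim_Markov stop} gives $\E\sharp\Si(r)\geq(R/r)^D$. Combining: $\E\mathcal{L}^d(F(r))\geq\kappa_d c^d p_0\,r^d(R/r)^D=\kappa_d c^d p_0 R^D\,r^{d-D}$, so $r^{D-d}\E\mathcal{L}^d(F(r))\geq\kappa_d c^dp_0R^D>0$ for all small $r$, which is exactly the claim (after relabeling $r=\ep$, and noting $d-D\geq0$ so $r^{d-D}$ is the genuine order of decay).

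For the two consequences: $\liminf_{\ep\to0}\ep^{d-D}\E\mathcal{L}^d(F(\ep))>0$ immediately forces the average value $\overline{C}^{\,\fr}_{d,F}=\lim_{\delta\to0}\frac1{|\ln\delta|}\int_\delta^1\ep^{D-d}\E\mathcal{L}^d(F(\ep))\,\ep^{-1}d\ep$ to be strictly positive, since the integrand is bounded below by a positive constant for all small $\ep$ and the integral of $\ep^{-1}$ over $[\delta,\delta_0]$ is $\asymp|\ln\delta|$; likewise if the essential limit $C^{\,\fr}_{d,F}$ exists it equals this positive average, hence is positive. For the identity $\overline{C}^{\,\fr}_{d,F}=\frac{2}{d-D}\overline{C}^{\,\fr}_{d-1,F}$ when $D<d$, I would apply the integral representation from Theorem~\ref{maintheorem}(III) with $k=d$ and $k=d-1$: $\overline{C}^{\,\fr}_{k,F}=\frac1\eta\int_0^R r^{D-k-1}R_k(r)\,dr$. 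Using $C_d(A(r),\cdot)=\mathcal{L}^d|_{A(r)}$ and the coarea/derivative relation $\frac{d}{dr}\mathcal{L}^d(A(r))=C_{d-1}(A(r),\R^d)+\text{(something)}$ — more precisely, for parallel sets one has the exact relation that the measure-theoretic derivative of $r\mapsto C_d(F_\s(r))$ equals $C_{d-1}(F_\s(r),\R^d)$ times the appropriate factor, actually $\tfrac{d}{dr}\mathcal L^d(F_\s(r)) = \mathcal H^{d-1}(\partial F_\s(r)) = C_{d-1}(\widetilde{F_\s(r)}) = 2\,C_{d-1}(F_\s(r))$ at regular values (the factor $2$ coming from the convention $C_{d-1}(\cl(\R^d\setminus A),\cdot)=C_{d-1}(A,\cdot)$ and $\partial F(r)$ being counted with both orientations, or equivalently from $C_{d-1}$ of a smooth domain being half its boundary area). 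Substituting this into $R_d(r)$ and integrating by parts in $\int_0^R r^{D-d-1}R_d(r)\,dr$ — the boundary terms vanish because $R_d(0^+)=0$ (both terms in $R_d$ agree to leading order as $r\to0$) and at $r=R$ the finitely many surviving terms cancel — transfers the factor $(D-d)$ out front and reproduces $\frac2{\eta}\int_0^R r^{D-d}R_{d-1}(r)\,dr\cdot\frac1{d-D}\cdot(-1)\cdot(-1)=\frac{2}{d-D}\overline{C}^{\,\fr}_{d-1,F}$ up to sign bookkeeping. The main obstacle I anticipate is making the differentiation-under-the-expectation and the integration-by-parts rigorous: one must justify $\frac{d}{dr}\E\mathcal{L}^d(F(r))=2\,\E C_{d-1}(F(r))$ as an a.e.\ identity using the regularity \eqref{Reg*-0}, Lemma~\ref{lems:large distances} for integrability of the surface area near $r=R$, and Fubini; everything else is routine bookkeeping with the formulas of Theorem~\ref{maintheorem}.
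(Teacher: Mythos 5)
Your plan follows the same strategy as the argument the paper invokes (it defers to the proof of \cite[Theorem~2.3]{Za23}, with the back path property replacing independence), and the core new part is done correctly: for $\s\in\Si(r)$ you get $r_{\min}r/R\le\bfr_\s\le r/R$, the sets $O_\s$ are pairwise disjoint and of diameter $<r/\sqrt2$, conditionally on $\s\in\Si(r)$ the set $F^{[\s]}$ is distributed as $F$ and independent of the path data (this is exactly where the back path property enters), and $\E\sharp\Si(r)\ge (R/r)^D$ by Proposition~\ref{dim_Markov stop}; together this gives $\E\mathcal L^d(F(r))\ge c\,r^{d-D}$, i.e.\ the positivity claim in its meaningful normalization $\ep^{D-d}\E\mathcal L^d(F(\ep))$, and the positivity of $\overline{C}^{\fr}_{d,F}$ and of $C^{\fr}_{d,F}$ (if it exists) follows as you say.

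The one step that would fail as literally written is your treatment of the boundary terms in the integration by parts for part three. You claim the boundary terms vanish because ``at $r=R$ the finitely many surviving terms cancel'', but $R_d(R^-)=\E\mathcal L^d(F(R))\neq 0$ (all indicators $\1_{(0,Rr_i]}$ are $0$ at $r=R$), so the endpoint term $\frac{R^{D-d}}{D-d}\E\mathcal L^d(F(R))$ does not vanish by itself. Moreover $R_d$ is not absolutely continuous: besides the a.e.\ derivative $2R_{d-1}$ it carries a jump part coming from the indicators, with atoms of total mass $\E\sum_{i}\delta_{Rr_i}\,\mathcal L^d\big(F_i(Rr_i)\big)$. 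A Stieltjes integration by parts then produces, in addition to $\frac{2}{d-D}\int_0^R r^{D-d}R_{d-1}(r)\,dr$, the endpoint term above and the jump contribution $-\frac1{D-d}\E\sum_i (Rr_i)^{D-d}\mathcal L^d(F_i(Rr_i))$; by scaling, \eqref{cond_exp_equation} and \eqref{minkdim} the latter equals $-\frac{R^{D-d}}{D-d}\E\mathcal L^d(F(R))$, so endpoint and jump terms cancel each other (the term at $0^+$ vanishes by \eqref{bound_2}). So your identity is correct, but the cancellation you need is between the endpoint at $R$ and the atoms at $r=Rr_i$, which your sketch does not identify. Alternatively, and more in line with the usual derivation, you can avoid the integral representations entirely: write $\E\mathcal L^d(F(\ep))=\E\mathcal L^d(F(1))-2\int_\ep^1\E C_{d-1}(F(t))\,dt$ (justified by Fubini and the a.e.\ identity $\frac{d}{dr}\mathcal L^d(F(r))=\mathcal H^{d-1}(\partial F(r))=2C_{d-1}(F(r))$, exactly the interchange you flag), insert this into the averaged limit of Theorem~\ref{maintheorem}(III), apply Fubini, and use the bound $\E\mathcal L^d(F(r))\le c\,r^{d-D}$ (which follows from $\E\sharp\Si(r)\le (R/(r_{\min}r))^D$) to kill the remainder term $\delta^{D-d}\E\mathcal L^d(F(\delta))/|\ln\delta|$; this yields $\overline{C}^{\fr}_{d,F}=\frac{2}{d-D}\overline{C}^{\fr}_{d-1,F}$ without any bookkeeping of jumps.
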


\begin{rems} \label{rem:dimensions}

As a consequence of Theorem~\ref{positive}, the number $D$ (defined in \eqref{minkdim}) may be interpreted as the Minkowski dimension of $F$ \emph{in the mean sense}. This notion arises, when in the definition of the (upper and lower) Minkowski dimension, the (upper or lower) Minkowski content is replaced by its mean version. In general, we do not yet know whether the Minkowski dimension of $F$ (as a random variable) exists in our model. But in some cases it exists and is almost surely constant. Then this almost sure Minkowski dimension of $F$ can be strictly smaller than $D$.
For example, in the case of homogeneous random fractals with (UOSC) it follows from  \cite[Main Theorem]{BHS12} together with \cite[Corollary 2.26]{Tr17} that the former is determined by the unique $s$ such that
$\mathbb{E}\,\ln(\sum_{i=1}^N r_i^s)=0$, which is in general, less than the above $D$ given by $\mathbb{E}\sum_{i=1}^N r_i^D =1$. Moreover, the corresponding almost sure average Minkowski content does not exist, see  \cite{Tr23}.

\end{rems}

\section{Proofs}\label{Proofs}

In order to prepare the proof of Theorem \ref{maintheorem},
we define the random function $\xi_k:(0,\infty)\to\R$ by $$\xi_k(r):=\1_{(0,R]}(r)C_k(F(r))-\sum_{i=1}^N \1_{(0,R\,r_i]}(r)C_k(F_i(r)), \quad r\in \reg_*F,$$
where the constant $R$ is as in the theorem and $\reg_*F$ as in \eqref{Reg*}.  Note that $\xi_k$ is $\P$-a.s.\ determined for almost all $r>0$, see equation \eqref{Reg*-0}.  By the motion invariance and scaling property of $C_k$, we get for a.a. $r>0$,
\begin{align} \label{eq:xi}
\xi_k(r)=\1_{(0,R]}(r)C_k(F(r))- \sum_{i=1}^N \1_{(0,R]}(r/r_i)r_i^k C_k(F^{[i]}(r/r_i)).
\end{align}

Below we will prove that the expectations of the absolute values of all the summands on the right hand side are finite. Then, in view of \eqref{Reg*-0}, the function $R_k=\E \xi_k\, $
in Theorem \ref{maintheorem} is determined at Lebesgue-a.a.\ arguments. For this and further arguments, the following estimates for $\xi_k$ are required. A proof is provided later on page \pageref{proof-lem:main-xi_k}.
\begin{lem}
  \label{lem:main-xi_k}
 There exist constants $\delta>0$ and $a_k>0$ such that
\begin{equation} \label{bound_2}
\E |\xi_k(r)|\leq a_k r^{k-D+\delta}\quad\text{ for a.a. } r>0.
\end{equation}

Moreover, for all $0<r_0\le R$,
\begin{\eq}\label{locbound}
\E\esup_{r>r_0}\limits|\xi_k(r)|<\infty,
\end{\eq}
where $\esup$ means the supremum over all arguments for which the function is determined.

\end{lem}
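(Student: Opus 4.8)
The plan is to reduce, for a.a.\ $r$, the random quantity $\xi_k(r)$ to a curvature contribution supported in a \emph{fixed} neighbourhood of the first‑level wall set $\partial\mathfrak f(O)=\bigcup_i\partial O_i$, and then to control it by the uniform per‑cell bounds of Lemma~\ref{lems:large distances} together with assumption (ii), at the cost of the random number $\sharp(\Sb(r))$ of boundary codes at scale $r$. Set $R':=R/|O|>\sqrt2$. Since $|F^{[\sigma]}|\le|O|$ a.s., for every $r>0$ and every $\sigma\in\Si(r)$ one has $r/\bfr_\sigma\ge R=R'|O|\ge R'|F^{[\sigma]}|$; hence $r/\bfr_\sigma$ is a regular value of $d_{F^{[\sigma]}}$ and, by homogeneity and motion covariance of $C_k$ and by Lemma~\ref{lems:large distances},
$$C_k^{\var}\big(F_\sigma(r),\R^d\big)=\bfr_\sigma^k\,C_k^{\var}\big(F^{[\sigma]}(r/\bfr_\sigma),\R^d\big)\le c_k(R')\,r^k,\qquad\sigma\in\Si(r).$$
Moreover, by UOSC there is a constant $M_0=M_0(d,r_{\min},r_{\max},O)$ such that, for every $r$, any ball of radius $2r$ contains at most $M_0$ of the pairwise disjoint cells $\{O_\sigma:\sigma\in\Si(r)\}$ (each containing a ball of radius $\asymp r$ and contained in one of radius $\asymp r$).

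\emph{Step 1 (geometric form of $\xi_k$).} Fix $r\in\reg_*F$, which by \eqref{Reg*-0} is the case for a.a.\ $r$ (using assumption (i) for $d\ge4$; for $d\le3$, $\crit F$ is a null set). Then $F(r)=\bigcup_{i\le N}F_i(r)$, and since $f_i(O)\cap f_j(O)=\emptyset$ one checks $F_i(r)\cap F_j(r)\subseteq(\partial\mathfrak f(O))(r)$ for $i\ne j$ (a point within $r$ of the two disjoint open sets $O_i,O_j$ lies within $r$ of $\partial O_i$). By locality of curvature measures, $C_k(F(r),\cdot)$ and $\sum_i C_k(F_i(r),\cdot)$ coincide on the open complement of $(\partial\mathfrak f(O))(r)$. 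Hence for a.a.\ $r<Rr_{\min}$ (so that all indicators in $\xi_k$ equal $1$),
$$\xi_k(r)=C_k\big(F(r),(\partial\mathfrak f(O))(r)\big)-\sum_{i=1}^N C_k\big(F_i(r),(\partial\mathfrak f(O))(r)\big),$$
so $|\xi_k(r)|\le C_k^{\var}\big(F(r),(\partial\mathfrak f(O))(r)\big)+\sum_i C_k^{\var}\big(F_i(r),(\partial\mathfrak f(O))(r)\big)$. Now refine $F(r)=\bigcup_{\sigma\in\Si(r)}F_\sigma(r)$: a cell $F_\sigma(r)$, $\sigma\in\Si(r)$, meeting $(\partial\mathfrak f(O))(r)$ satisfies $\sigma\in\Sb(r)$, because $(\partial\mathfrak f(O))(r)\subseteq(\mathfrak f(O)^c)(r)$. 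Applying locality once more inside $(\partial\mathfrak f(O))(r)$, away from the overlaps $O_\sigma(r)\cap O_\tau(r)$ the measure $C_k(F(r),\cdot)$ agrees there with a single $C_k(F_\sigma(r),\cdot)$, while each overlap carries $C_k^{\var}(F(r),O_\sigma(r)\cap O_\tau(r))\le c_k r^k$ — this is exactly assumption (ii) for $k\le d-2$, and for $k\in\{d-1,d\}$ it follows elementarily, since $O_\sigma(r)\cap O_\tau(r)$ is a slab of width $\le2r$ around a $C^1$ hypersurface of $(d-1)$‑measure $\asymp r^{d-1}$. By the $M_0$‑bound at most $M_0\,\sharp(\Sb(r))$ overlaps contribute; together with the per‑cell estimate from the previous paragraph and the analogous treatment of each $F_i(r)$, this produces a constant $C$ with
$$|\xi_k(r)|\le C\,r^k\,\sharp(\Sb(r))\qquad\text{for a.a.\ }r\in(0,Rr_{\min}).$$

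\emph{Step 2 (the gain $\delta$ — the main obstacle).} It remains to show $\E\,\sharp(\Sb(r))\le c\,r^{\delta-D}$ for some $\delta\in(0,D)$; then $\E|\xi_k(r)|\le Cc\,r^{k-D+\delta}$ for $r<Rr_{\min}$, while for $r\in[Rr_{\min},R]$ the bound \eqref{bound_2} follows from \eqref{locbound} below ($\E|\xi_k(r)|$ is bounded there and $r^{k-D+\delta}$ is bounded away from $0$), and for $r>R$ it is trivial since $\xi_k(r)=0$. To estimate $\E\,\sharp(\Sb(r))$ I would introduce geometrically spaced scales $\rho_j:=R\theta^{\,j}$ ($0<\theta<1$ fixed, $j=0,\dots,m$, with $m\asymp\log(1/r)$ and $\rho_m\asymp r$), and, for $\sigma\in\Sb(r)$, its ancestors $\sigma|(j)\in\Si(\rho_j)$. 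As $F_\sigma\subseteq F_{\sigma|(j)}$, every ancestor $\sigma|(j)$ lies within $\asymp\rho_j$ of $\partial\mathfrak f(O)$. Using USOSC \eqref{USOSC} — i.e.\ $\P(F\cap O\ne\emptyset)>0$, equivalently $=1$ — one argues that a cell all of whose offspring (at the next, $\theta$‑times smaller scale) remain that close to $\partial\mathfrak f(O)$ carries only a fraction $\le q<1$ of its $\bfr^D$‑mass, since with a uniformly positive probability an offspring reaches into $O$, hence a definite distance away from $\partial\mathfrak f(O)$. Iterating over the $m$ generations, using the back path independence (A2) for conditional independence of the successive steps and Proposition~\ref{dim_Markov stop} to normalise the $\bfr^D$‑weighted cell counts at each scale, gives $\E\,\sharp(\Sb(r))\le c\,q^{\,m}r^{-D}\le c'\,r^{\delta-D}$ with $\delta:=|\log q|/|\log\theta|$ (shrink $\delta$ if necessary so that $\delta<D$). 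I expect the genuine difficulty to be precisely the establishment of a \emph{uniform} escape probability $q<1$ — uniform over the random RIFS and over which part of $\partial\mathfrak f(O)$ is at stake — which I would carry out along the lines of the boundary estimates of \cite{Za11,RWZ23}.

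\emph{Step 3 (the local bound \eqref{locbound}).} Fix $r_0\in(0,R]$; since $\xi_k(r)=0$ for $r>R$ it suffices to bound $\Psi(r_0):=\esup_{r_0<r\le R}C_k^{\var}(F(r),\R^d)$ and the corresponding quantity for the summands of $\xi_k$. Decomposing $F(r)=\bigcup_{\sigma\in\Si(r)}F_\sigma(r)$, the per‑cell estimate of the first paragraph gives $C_k^{\var}(F_\sigma(r),\R^d)\le c_k(R')R^k$; combined with the overlap bound (assumption (ii), or the elementary estimate for $k\in\{d-1,d\}$) and the $M_0$‑bound, $C_k^{\var}(F(r),\R^d)\le(1+M_0)c_k(R')R^k\,\sharp(\Si(r))$, and since $r\mapsto\sharp(\Si(r))$ is non‑increasing, $\Psi(r_0)\le(1+M_0)c_k(R')R^k\,\sharp(\Si(r_0))$, with $\E\,\sharp(\Si(r_0))<\infty$ by Proposition~\ref{dim_Markov stop} together with $\bfr_\sigma>r_0r_{\min}/R$ on $\Si(r_0)$. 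Using $C_k^{\var}(F_i(r),\R^d)=r_i^k C_k^{\var}(F^{[i]}(r/r_i),\R^d)$ and $\1_{(0,Rr_i]}(r)=\1\{r/r_i\le R\}$ one obtains
$$\esup_{r>r_0}|\xi_k(r)|\le\Psi(r_0)+\sum_{i=1}^N\1\{r_i>r_0/R\}\,r_i^k\,\Psi^{[i]}(r_0/r_i),$$
with $\Psi^{[i]}(r_0/r_i)\le(1+M_0)c_k(R')R^k\,\sharp(\Si^{[i]}(r_0/r_i))$. Taking expectations, applying the conditional expectation formula \eqref{cond_exp_equation_1} (with $n=1$) and Proposition~\ref{dim_Markov stop} (which gives $\E\sum_{\tau\in\Si(\lambda)}\bfr_\tau^D=1$ for every $\lambda>0$), the second summand is bounded by $\mathrm{const}(r_0)\cdot\E\sum_i r_i^{k+D}\le\mathrm{const}(r_0)\cdot r_{\max}^k<\infty$; in particular no second moment of $N$ is needed, the problematic sum over $i$ already carrying the weight $r_i^{k+D}$. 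Hence $\E\esup_{r>r_0}|\xi_k(r)|<\infty$, which also supplies the boundedness of $\E|\xi_k(r)|$ on $[Rr_{\min},R]$ used in Step~2.
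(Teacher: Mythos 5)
Your Steps 1 and 3 follow essentially the paper's own route: localize the discrepancy $\xi_k(r)$ to the $r$-neighbourhood of the complement of $\mathfrak f(O)$, decompose over the Markov stop $\Si(r)$, control each cell by Lemma~\ref{lems:large distances} and the overlaps by assumption (ii) together with a bounded-overlap constant, arriving at a bound of the form $|\xi_k(r)|\le C r^k\bigl(\sharp(\Sb(r))+\sum_i\sharp(\Sb^{[i]}(r/r_i))\bigr)+CNr^k$, and then deduce \eqref{locbound} from the boundedness of $\sharp(\Si(r))$ for $r>r_0$ together with \eqref{cond_exp_equation_1}. (A small blemish: your justification of the overlap bound for $k=d-1$ via a ``slab around a $C^1$ hypersurface'' is not available, since $\partial O_\sigma$ has no prescribed smoothness and the relevant surface is $\partial F(r)$; one needs instead a standard surface-area estimate for $r$-parallel sets in balls of radius comparable to $r$.)

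The genuine gap is Step 2, which is exactly the content of the paper's Lemma~\ref{L_2}: the estimate $\E\,\sharp(\Sb(r))\le c\,r^{\delta-D}$ is the heart of \eqref{bound_2}, and you do not prove it --- you yourself flag the uniform escape probability $q<1$ as the ``genuine difficulty'' and leave it to be carried out along the lines of the literature. Concretely, three things are missing. First, USOSC only gives $\P(F\cap O\neq\emptyset)>0$; to get a quantitative escape you must first produce fixed $\rho,\alpha$ with $\P(\Si(\rho,\alpha)\neq\emptyset)>0$, where $\Si(\rho,\alpha)$ consists of the $\tau\in\Si(\rho)$ with $O_\tau(\alpha)\subset O$, and then verify that the presence of such a ``good'' subword at an intermediate position of $\sigma$ forces $d(O_\sigma,\partial\mathfrak f(O))>2r$, hence $\sigma\notin\Sb(r)$; this geometric implication (the paper's chain of inequalities using $\bfr_\tau\ge\rho/R$) is what converts positive probability into a per-generation loss of weighted mass. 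Second, the exponent $\delta$ cannot be prescribed independently of this construction: in the paper it is chosen so that $\E\sum_{\tau\in\Si(\rho)\setminus\Si(\rho,\alpha)}(\bfr_\tau)^{D-\delta}=1$, and the iteration bounds not the count directly but the weighted quantity $\psi(r)=r^{D-\delta}\E\,\sharp(\Xi(r))$ over words avoiding good subwords, using the back path property via \eqref{cond_exp_equation} at each step; your ``fraction $q$ of $\bfr^D$-mass per generation'' bookkeeping must be recast in this form to be iterable under the dependence allowed by (A2), since offspring of distinct cells of one generation need not be independent, and $\sharp\Sb(r)$ is a count, not a mass, so the passage between the two uses $\bfr_\sigma\asymp r$ on $\Si(r)$ and the specific exponent $D-\delta$. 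Third, the escape argument gives nothing at the last $O(1)$ generations: a word in $\Sb(r)$ may contain a good subword within a bounded distance of its end, so one must peel off a bounded suffix (the paper's decomposition $\sigma=i\omega\tau$ with $\omega\in\Xi^{[i]}(r_*)$ and $|\tau|\le m'$) before the avoidance argument applies; your geometric-scales sketch has the same boundary effect at the finest scales $\rho_j\asymp r$ and does not address it. Until these points are supplied, \eqref{bound_2} is not established.
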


\begin{proof}[Proof of Theorem \ref{maintheorem}] In order to translate the problem into the language of the renewal theorem, we substitute $r=Re^{-t}$ and define
\begin{align*}
  Z_{k}(t)&:=\1_{[0,\infty)}(t)e^{(k-D)t} C_k(F(Re^{-t})),
\end{align*}
whenever $(Re^{-t}, F)\in\Reg$.
Moreover, we define
\begin{align*}
  z_{k}(t)&:=e^{(k-D)t}\xi_k(Re^{-t}),
\end{align*}
whenever $Re^{-t}\in\reg_*F$.
Note that $z_k(t)=0$ for $t< 0$. We infer from the relation \eqref{eq:xi} that, for any $t\geq 0$ such that $Re^{-t}\in\reg_*F$,
$$Z_k(t)=\sum_{i=1}^N r_i^D Z_{k}^{[i]}(t-|\ln r_i|)+z_k(t).$$
Note in particular, that the right hand side is well defined for all such $t$. (For all $t<0$ the relation also holds trivially, since both sides are zero.)

Denote
\begin{equation} \label{def_T}
T:=\left\{t>0: \P(Re^{-t}\in\reg_*F)=1\right\}.
\end{equation}
Let $\mu^{\ast n}$ be the $n$th convolution power of the distribution $\mu=\E\sum_{i=1}^N \1_{(\cdot)}(|\ln r_i|)r_i^D$, if $n\geq 1$, $\mu^{\ast 0}$ the Dirac measure at $0$, and
\begin{\eq}\label{U}
U(t):=\sum_{n=0}^\infty\mu^{\ast n}((0,t]),~t>0\,.
\end{\eq}
Note that the summands on the right vanish for $n>t/|\ln r_{\max}|$, so that the summation is finite for each $t>0$. (In the sequel $U$-a.a.\ means a.a.\ with respect to the corresponding measure.)

We shall need the following properties of the set $T$ defined in \eqref{def_T}. A proof will be provided later.

\begin{lem}   \label{L_1}
Under the assumptions of Theorem~\ref{maintheorem} we have
\begin{equation} \label{T_full}
\cL((0,\infty)\setminus T)=0
\end{equation}
and
\begin{\eq} \label{ts}
t\in T \text{ implies } t-s\in T  \text{ for } \mu \text{-a.a. } s\leq t \text{   and for } U \text{-a.a. }s\leq t.
\end{\eq}
\end{lem}

Using \eqref{ts}, we infer for $t\in T$ from the above equality for $Z_k(t)$ that
\begin{align*}
\E|Z_k(t)|&\leq\E\sum_{i=1}^N r_i^D\left|Z_k^{[i]}\left(t-|\ln r_i|\right)\right|+\E|z_k(t)|\\
&=\sum_{i=1}^\infty\E\,\1(i\in\Sigma_1) r_i^D\left|Z_k^{[i]}\left(t-|\ln r_i|\right)\right|+\E|z_k(t)|\\
&=\int \E|Z_k(t-s)|\mu(ds)+\E|z_k(t)|\,.
\end{align*}
For the last equality, we have first used \eqref{cond_exp_equation_1} with $n=1$, $X:=Z_k$ (a random function of real variable) and $h(\rho,X):=|X(t-|\ln \rho|)|$, and then the definition of the measure $\mu$.
In view of \eqref{ts},
 we obtain from iterated application of this inequality that
 \begin{align}
   \label{eq:Z-U-bound}
   \E|Z_k(t)|\leq\int_0^t\E|z_k(t-s)|dU(s), \quad  t\in T.
 \end{align}

We conclude from \eqref{bound_2} in Lemma~\ref{lem:main-xi_k} that there exist constants $a'_k>0$ and $\delta>0$ such that
\begin{\eq}\label{bound}
\E|z_k(u)|\le a'_k\1_{[0,\infty)}(u)e^{-\delta u},\quad u\in T.
\end{\eq}

Since in our case $U(t)\le t/|\ln r_{\max}|$ for all $t>0$, we infer from \eqref{bound} and \eqref{eq:Z-U-bound} that for some constants $d_k$ and $d'_k$,
$$\E|Z_k(t)|\le \int_0^t \E|z_k(t-s)| dU(s)<d_k\,,~t\in T,$$
and consequently,
\begin{\eq}\label{expectation bound}
\E|C_k(F(r))|\leq d'_k\, r^{k-D}|\ln(r/R)|~,~~\mbox{for a.a.}~ 0<r\leq R.
\end{\eq}
This shows, in particular, the finiteness of the expectations mentioned at the beginning of the section.

Moreover, we can repeat to above arguments omitting the absolute value signs and replacing the corresponding inequalities by  equalities in order to obtain the renewal equation in the sense of Feller \cite{Fe71}. We get for all $t\in T$,
$$\E Z_k(t)=\int_0^t \E Z_k(t-s)\mu(ds)+\E z_k(t)$$
and
\begin{align}
   \label{eq:Z_k-repres} \E Z_k(t)=\int_0^t \E z_k(t-s)\, dU(s).
\end{align}

We now define two auxiliary functions on $(0,\infty)$ by
\begin{align*}
\overline{z}_k(t)&:=\begin{cases}
   \E z_k(t), &{\rm if}~t\in T,\\
   \limsup\limits_{\substack{t'\to t\\t'\in T}}\E z_k(t'), &{\rm if}~t\in (0,\infty)\setminus T,
\end{cases}\\
\overline{Z}_k(t)&:=\int_0^t\overline{z}_k(t-s)dU(s),~t>0.
\end{align*}
Then in view of \eqref{ts} and \eqref{eq:Z_k-repres},
\begin{align}\label{Zbar}
\overline{Z}_k(t)=\E Z_k(t) \quad \text{ for } t\in T.
\end{align}
Lemma~\ref{L_cont} implies now that the random function $\xi_k$ is continuous at all $r\in\reg_*F$.  (For $k=d,d-1$ we do not need (i) for this conclusion.)
The dominated convergence theorem, justified by \eqref{locbound}, yields that at each $t\in T$ the function $\overline{z}_k$, which agrees with $e^{(k-D)t}\E \xi_k(Re^{-t})$ at such $t$, is continuous. Since $T$ has full measure in $(0,\infty)$ (see \eqref{T_full}), $\overline{z}_k$ is continuous Lebesgue-a.e. on $(0,\infty)$. Moreover, in view of \eqref{bound}, $\overline{z}_k$ is bounded by a directly Riemann integrable function. Thus, according to  Asmussen \cite[Prop. 4.1, p. 118]{As87}, $\overline{z}_k$ is directly Riemann integrable, too. Therefore the classical renewal theorem in Feller \cite[p.\ 363]{Fe71} can be applied, which yields that
$$\lim_{t\rightarrow\infty}\overline{Z}_k(t)=\frac{1}{\eta}\int_0^\infty\overline{z}_k(t)\,dt.$$
The right hand side agrees with
$$\frac{1}{\eta}\int_0^\infty\E z_k(t)\,dt=\frac{1}{\eta}\int_0^\infty e^{(k-D)t}R_k(Re^{-t})\, dt,$$
since $\overline{z}_k(t)=\E z_k(t)$ for Lebesgue-a.a.\ $t$.
Multiplying $\j^{D-k}$ in this equation and substituting $r=R e^{-t}$ under the integral, assertion (I) follows in view of
\eqref{Zbar}.

Since $\E Z_k$ is bounded on finite intervals, in the non-lattice case the corresponding average limit in (III) is a consequence.

In the lattice case, the renewal theorem provides the limit along arithmetic progressions with respect to the lattice constant, here only for those sequences along which the function is determined. This shows assertion (II). The latter also implies the average convergence (III). For more details we refer to the arguments of Gatzouras at the end of the proof of \cite[Theorem~2.3]{Ga00} in the classical case.
This completes the proof of Theorem~\ref{maintheorem} up to proofs of Lemmas~\ref{lem:main-xi_k} and \ref{L_1}.
\end{proof}

\begin{proof}[Proof of Lemma~\ref{L_1}]
Using \eqref{Reg*-0}, Fubini's theorem and the definition of $T$, we get
\begin{align*}
    0&=\E\cL((0,R)\setminus\reg_*F) =\int_0^R \P(r\not\in\reg_*F)\, dr\\
    &=\int_0^\infty\P(Re^{-t}\not\in\reg_*F)\, Re^{-t}\, dt = \int_{(0,\infty)\setminus T} \P(Re^{-t}\not\in\reg_*F)\, Re^{-t}\, dt.
\end{align*}
Since the integrand of the last integral is positive, \eqref{T_full} follows.

To show \eqref{ts}, note that $t\in T$ implies $\P(Re^{-t}\in\reg_*F)=1$ and thus, for any $i\in\N$,
$$\P\left[\left. Re^{-(t-|\ln r_i|)}\in\reg_*F^{[i]}\right| i\leq N\right]=1,$$
which follows from the definition \eqref{Reg*} of $\reg_*F$ and the similarity property of the mappings $f_i$. Employing the random function $X(t):=\1\{Re^{-t}\in\reg_*F\}$, $t>0$, we infer that
\begin{align*}
1&=\E\sum_{i=1}^N r_i^D
=\sum_{i=1}^\infty\E\left(\1\{i\leq N\}r_i^D X^{[i]}(t-|\ln r_i|)\right)\\
&=\E\sum_{i=1}^N r_i^D X(t-|\ln r_i|)\\
&=\int X(t-s)\,\mu(ds).
\end{align*}
Here we have used \eqref{minkdim}, \eqref{cond_exp_equation_1} with $h(\rho,X):=\rho^DX(t-|\ln\rho|)$, and the definition of the distribution $\mu$.
Hence, $\P(Re^{-(t-s)}\in\reg_*F)=1$ for $\mu$-a.a. $s$ and therefore $t-s\in T$ for $\mu$-a.a. $s\leq t$, which proves the first assertion in \eqref{ts}.

From iterated application of this result and the  definition of $U$ (see \eqref{U}) we obtain that $t\in T$ implies $t-s\in T$ for $U$-a.a. $s\leq t$.
\end{proof}

In order to prepare the proof Lemma~\ref{lem:main-xi_k}, we state an estimate concerning the number of elements of the boundary code sets $\Sb(r)$ defined in \eqref{btree}.

The proof is analogous to the one of \cite[(4.15)]{RWZ23} for the homogeneous case, but it has to be adapted to our model assumptions. We also redefine slightly the set of words $\Xi(r)$, fixing this way a small gap in the proof in \cite{RWZ23}.

\begin{lem}  \label{L_2}
There is some constant $\delta$ with $0<\delta<D$ such that
\begin{align}\label{aux_est}
\sup_{0<r<R}\E r^{D-\delta}\sharp\left(\Sb(r)\right)<\infty.
\end{align}
\end{lem}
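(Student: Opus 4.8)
The plan is to bound $\sharp(\Sb(r))$ by a sum of indicator functions over the cut set $\Si(r)$ and then take expectations, exploiting the back path property via \eqref{cond_exp_equation_1} to reduce everything to a geometric count for a single copy of $F$. First I would fix $r\in(0,R)$ and recall that $\s\in\Sb(r)$ means $\s\in\Si(r)$ and $F_\s(r)\cap(\mathfrak f(O)^c)(r)\neq\emptyset$. Since $F_\s\subset O_\s=\bff_\s(O)$ and $\bfr_\s\asymp r$ on $\Si(r)$ (precisely $\bfr_\s\le r/R<\bfr_{\s||\s|-1}$), the set $F_\s(r)$ has diameter at most $|F|\bfr_\s+2r\le C_1 r$ for a constant $C_1$ depending only on $|O|$ and $R$. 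Hence membership in $\Sb(r)$ forces $O_\s$ (a set of diameter $\ge r_{\min}\diam O\cdot\bfr_{\s||\s|-1}/r_{\min}$, comparable to $r$) to lie within distance $\asymp r$ of $\partial\mathfrak f(O)$. By a standard volume/packing argument, using that the sets $O_\s$, $\s\in\Si(r)$, are pairwise disjoint (UOSC) and each contains a ball of radius $\asymp\bfr_\s\asymp r$, the number of such $\s$ whose $O_\s$ meets a fixed $C_2 r$-neighbourhood of $\partial\mathfrak f(O)$ is at most $C_3\,r^{-(d-1)}\,(\text{something}) $ — but this is the wrong kind of bound because $\mathfrak f(O)$ itself is random. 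The correct route is to count \emph{relative} to the generation-one cells: $\s\in\Sb(r)$ with $|\s|\ge 1$ has $\s=i\tau$ with $\tau\in\Si(r)^{[i]}\cdot(\text{scaled})$, and $F_\s(r)$ near $\partial\mathfrak f(O)$ means $F^{[i]}_\tau$ is near $\partial O$ in the rescaled copy, up to the contribution of cells in \emph{other} branches $f_j(O)$, $j\ne i$, which are at distance $\ge\dist(f_i(O),f_j(O))$ but can still be within $2r$.

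Concretely, I would argue as follows. Write, for $\s\in\Si(r)$, $\1\{\s\in\Sb(r)\}\le \1\{\dist(O_\s,\partial\mathfrak f(O))\le C_1 r\}$. Split $\partial\mathfrak f(O)\subset\partial O\cup\bigcup_{i,j:i\ne j}(f_i(O)\cap f_j(\overline O))$ — actually $\partial\mathfrak f(O)\subset\bigcup_i\partial f_i(O)$ and $\partial f_i(O)=f_i(\partial O)$. So $\s=i\tau\in\Sb(r)$ implies either $O_\tau^{[i]}$ (in the copy rooted at $i$, rescaled by $r_i$) is within distance $C_1 r/r_i$ of $\partial O$, \emph{or} $O_\s$ is within $C_1 r$ of $f_j(O)$ for some $j\ne i$. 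The first alternative, after conditioning on $i\le N$ and rescaling, reduces (via \eqref{cond_exp_equation_1} with $X$ the geometric counting functional of $F$) to estimating $\E\sharp\{\tau\in\Si(\rho): \dist(O_\tau,\partial O)\le C_1\rho\}$ for a single copy of $F$; call this quantity $G(\rho)$. The second alternative contributes only finitely many $\s$ per pair $(i,j)$, by disjointness of the $O_\s$ and a packing bound in the fixed annular region $\{x: \dist(x,f_j(O))\le C_1 r\}\cap f_i(O)$, which has volume $\le C_4 r\,|f_i(O)|^{d-1}\le C_4 r\,(\bfr_i)^{d-1}$; dividing by the volume $\asymp r^d$ of the ball inside each $O_\s$, this gives at most $C_5(\bfr_i/r)^{d-1}\le C_5 R^{d-1}r_i^{d-1}/\bfr_{i\cdots}$... more carefully one gets a bound of the form $C_5\, r^{-1}(\bfr_i)^{d-1}\cdot$(number of generation-one indices), which is summable after multiplying by $r^{D-\delta}$ provided $\delta<D$, since $\E\sum_i (\bfr_i)^{\text{something}}<\infty$.

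The self-referential bound I would then set up is: letting $\Phi(r):=\E\, r^{D-\delta}\sharp(\Sb(r))$ and $g(r):=\E\, r^{D-\delta}\sharp\{\s\in\Si(r):\dist(O_\s,\partial O)\le C_1 r\}$, the decomposition above together with \eqref{cond_exp_equation_1} and Proposition~\ref{dim_Markov stop} yields $\Phi(r)\le \E\sum_{i=1}^N r_i^{D-\delta}\,\Phi(r/r_i)\cdot(\text{or rather } g)\, +\, (\text{boundary-pair remainder bounded uniformly in }r)$. Here the key point is that $\E\sum_i r_i^{D-\delta}<\E\sum_i r_i^D=1$ for $\delta>0$ small (by monotone/dominated convergence in $\delta$ and $r_i\le r_{\max}<1$), so the operator $\psi\mapsto \E\sum_i r_i^{D-\delta}\psi(r/r_i)$ is a strict contraction on bounded functions; combined with a uniform-in-$r$ bound on the remainder term (the disjoint-packing count near the finitely many interfaces $f_i(O)\cap f_j(O)$, plus the near-$\partial O$ count $g$, which is itself bounded because the $O_\s$, $\s\in\Si(r)$, near $\partial O$ lie in a fixed $O(r)$-thin shell and are disjoint with inradius $\asymp r$), a standard Grönwall/iteration argument gives $\sup_{0<r<R}\Phi(r)<\infty$.

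The main obstacle I expect is the \emph{boundary-pair remainder}: making precise that the number of cells $O_\s$, $\s\in\Si(r)$, lying within distance $C_1 r$ of the (random) interface between two first-level pieces $f_i(O),f_j(O)$ is bounded by a constant times $r^{-1}(\bfr_i\vee\bfr_j)^{d-1}$ uniformly, and that these constants sum against $r^{D-\delta}$. This requires a careful packing estimate: each such $O_\s$ contains a ball $B(x_\s,c\,\bfr_\s)$ with $\bfr_\s\asymp r/R$, the balls are disjoint, and they all lie in $\{x\in f_i(O): \dist(x,f_j(O))\le C_1 r\}$, a region of volume $\le C\,r\cdot r_i^{d-1}\cdot|O|^{d-1}$; hence their number is $\le C' r^{1-d}\cdot r\cdot r_i^{d-1}=C' r_i^{d-1}$ — crucially independent of $r$. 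Summing over ordered pairs $(i,j)$, $i\ne j$, among the $N$ first-level maps, and taking expectations (using $\E N^2<\infty$, which one should add as a hypothesis or derive from $\E N<\infty$ plus the finite-state structure — actually $\E N<\infty$ suffices if one first writes the count as $\sum_{i\ne j}$ and uses $\sum_{i\ne j} r_i^{d-1}\le N\sum_i r_i^{d-1}$... this needs $\E(N\sum_i r_i^{d-1})<\infty$, which follows from $r_i\le r_{\max}$ and $\E N<\infty$), gives a finite bound. The remaining subtlety is bootstrapping from "finitely many near each first-level interface" through all levels via the contraction property above; I would handle this exactly as in \cite[(4.15)]{RWZ23}, with the redefinition of the auxiliary word set $\Xi(r)$ (as the authors note) closing the gap present there, namely ensuring that the interface-proximity condition is correctly inherited under the shift $\theta_i$ and does not double-count cells straddling two interfaces.
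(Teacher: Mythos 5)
There are genuine gaps, and they sit exactly where the real difficulty of the lemma lies. First, your central packing claim is wrong both arithmetically and conceptually. The number of pairwise disjoint cells $O_\s$, $\s\in\Si(r)$, each containing a ball of radius $\asymp r$, that fit into a slab of thickness $\asymp r$ around a $(d-1)$-dimensional interface is of order $(r\cdot r_i^{d-1})/r^d = r_i^{d-1}r^{1-d}$, not $O(1)$: in your line ``$r^{1-d}\cdot r\cdot r_i^{d-1}=C'r_i^{d-1}$'' the powers of $r$ do not cancel. The same defect affects your function $g$: the cells near $\partial O$ fill a shell whose volume you cannot control at all, since $O$ is only assumed open (no rectifiability or surface-measure bound on $\partial O$ is available), and even for a smooth $\partial O$ the count is $\asymp r^{1-d}$. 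A deterministic bound of order $r^{-(d-1)}$ cannot yield $\E\,\sharp\Sb(r)\lesssim r^{\delta-D}$ unless $D>d-1$, whereas here $D$ may be any number in $(0,d]$. Second, your proposed contraction is backwards: since $r_i\le r_{\max}<1$, lowering the exponent \emph{increases} the weights, so $\E\sum_i r_i^{D-\delta}\ge\E\sum_i r_i^{D}=1$ for every $\delta>0$; the operator $\psi\mapsto\E\sum_i r_i^{D-\delta}\psi(r/r_i)$ is expanding, and your Gr\"onwall/iteration step does not close.

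These two problems point to the ingredient your proposal never uses: the \emph{strong} open set condition \eqref{USOSC}. The paper's proof is probabilistic, not packing-based: USOSC yields $\rho,\alpha$ with $\P(\Si(\rho,\alpha)\neq\emptyset)>0$, where $\Si(\rho,\alpha)=\{\tau\in\Si(\rho):O_\tau(\alpha)\subset O\}$ are the ``protected'' words; removing them leaves strictly less than unit $D$-weighted expected mass, which is the only mechanism allowing one to choose $\delta>0$ with $\E\sum_{\tau\in\Si(\rho)\setminus\Si(\rho,\alpha)}\bfr_\tau^{D-\delta}=1$. One then checks geometrically that any $\s\in\Sb(r)$ contains no protected subword except possibly within a tail of bounded length (a protected subword forces $d(O_\s,\partial\mathfrak{f}(O))>2r$), so $\Sb(r)$ injects, up to the first letter and a bounded suffix, into the set $\Xi(r)$ of stopped words avoiding protected subwords, and a renewal-type inequality $\psi(r)\le\sup_{r'\ge r/\rho}\psi(r')$ for $\psi(r)=r^{D-\delta}\E\,\sharp\Xi(r)$ gives the uniform bound. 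Your closing remark that you would ``handle this exactly as in \cite[(4.15)]{RWZ23}'' is in effect an appeal to this argument rather than a proof; as written, the mechanism you actually propose (deterministic interface packing plus a contraction in $\delta$) fails.
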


\begin{proof}
By USOSC, i.e., UOSC with $O$ such that $\P(F\cap O\ne\emptyset)>0$, there exist some constants $\alpha>0$ and $0<\r<1$ such that $\P(\Si(\r,\alpha)\ne\emptyset)>0$, where
\begin{\eq}\label{rhoalpha_}
\Si(\r,\alpha):=\left\{\tau\in\Si(\r): O_\tau(\alpha)\subset O\right\}.
\end{\eq}
By Proposition \ref{dim_Markov stop}, we have $\E\sum_{\s\in\Si(\r)}(\bfr_{\s})^D=1$.
Thus, there exists $0<\delta<D$ such that
\begin{\eq}\label{delta_}
\E\sum_{\tau\in\Si(\r)\setminus\Si(\r,\alpha)} (\bfr_\tau)^{D-\delta}=1\, .
\end{\eq}
Let $\Xi_*\subset\Sigma_*$ be the set of all finite words $\sigma\in\Sigma_*$ such that $\tau\not\in\Sigma^{[\sigma']}(\rho,\alpha)$ whenever $\sigma=\sigma' \tau\sigma''$, and denote
$$\Xi(r):=\Xi_*\cap\Sigma(r),\quad r>0.$$
Below we will show that there is some constant $Q>0$ such that, for all $r>0$,
\begin{equation}  \label{psi}
r^{D-\delta}\E\,\sharp\left(\Xi(r)\right)<Q.
\end{equation}

Note that if $\sigma\in\Sigma(r)$ is of the form $\sigma=i\sigma'\tau\sigma''$ with $i\leq N$, $\sigma'\in\Sigma^{[i]}_*$, $\tau\in\Sigma^{[i\sigma']}(\rho,\alpha)$ and $\sigma''\in\Sigma^{[i\sigma'\tau]}_*$ then
$$d(O_\sigma,\partial \mathfrak{f}(O))>r_i\bfr_{\sigma'}\alpha.$$
Let $m$ be the smallest integer with $r_{\max}^{m-1}<\alpha/(2\rho)$. We claim that, if $|\sigma''|\geq m$, then $\sigma\not\in\Sigma_b(r)$. Indeed, we have
$\bfr_{\sigma''|\,|\sigma''|-1}<\alpha/(2\rho)$, and using that $\tau\in\Sigma^{[i\sigma']}(\rho)$ and $\sigma\in\Sigma(r)$, we conclude that
\begin{align*}
    d(O_\sigma,\partial \mathfrak{f}(O))&>r_i\bfr_{\sigma'}\alpha
    >2Rr_i\bfr_{\sigma'}(\rho/R)\bfr_{\sigma''|\,|\sigma''|-1}\\
    &\geq 2Rr_i\bfr_{\sigma'}\bfr_\tau \bfr_{\sigma''|\,|\sigma''|-1}
    =2R\bfr_{\sigma|\, |\sigma|-1}
    >2r,
\end{align*}
which implies the claim.

Consequently, any $\sigma\in\Sigma_b(r)$ can be written in the form $\sigma=i\omega\tau$ with $i\leq N$, $\omega\in\Xi^{[i]}_*$ and $\tau\in\Sigma^{[i\omega]}_m$.
The similarity factor $\bfr_\omega$ of $\omega$ lies in the interval $[r/(r_{\max})^m,r/(r_{\min})^m]$. Observe that moving a limited number of coordinates from the end of $\omega$ to the word $\tau$ relaxes the assumptions on $\sigma$. Hence any $\sigma\in\Sigma_b(r)$ can be written in the form $\sigma=i\omega\tau$ with $i\leq N$, $\omega\in\Xi^{[i]}(r_*)$ with $r_*:=r/(r_{\min})^m$, and $\tau\in\Sigma^{[i\omega]}_*$ with $|\tau|\leq m':= m\lceil\ln r_{\min}/\ln r_{\max}\rceil$.
Therefore we have
$$
  r^{D-\delta}\E\,\sharp\left(\Sigma_b(r)\right)
    \leq r^{D-\delta}\E\sum_{i=1}^N\sum_{\omega\in\Xi^{[i]}(r_*)}
    \sum_{\tau\in\Sigma^{[i\omega]}_{m'}}1.
$$
Applying the conditional independence (A2) and the fact that
$$
\E\sharp\Sigma_{m'}
\leq \E \sum_{\tau\in\Sigma_{m'}} (\bfr_\tau/r_{\min}^{m'})^D
= (r_{\min})^{-m'D}\E\sum_{\tau\in\Sigma_{m'}}(\bfr_\tau)^D
=(r_{\min})^{-m'D}=:P,
$$
we infer that
$$    r^{D-\delta}\E\,\sharp\left(\Sigma_b(r)\right)
    \leq P\, r^{D-\delta}\E\sum_{i=1}^N\,\sharp\left(\Xi^{[i]}(r_*)\right).$$
Using (A2) again and \eqref{psi}, we get
\begin{align*}
   r^{D-\delta}\E\,\sharp\left(\Sigma_b(r)\right)
    \leq P\, r^{D-\delta}\E N\, \E\,\sharp\left(\Xi(r_*)\right)
   & = P\, \,r_{\min}^{m(D-\delta)} \E N r_*^{D-\delta} \E\,\sharp\left(\Xi(r_*)\right)\\
   & \leq PQ\, r_{\min}^{m(D-\delta)} \E N,
\end{align*}
which yields the assertion of Lemma~\ref{L_2}.

It remains to verify \eqref{psi}.
Using \eqref{rhoalpha_} and the definition of $\Xi(r)$ we infer for sufficiently large $M$ and $r<\rho$,
\begin{eqnarray*}
\psi(r)&:=&r^{D-\delta}\E\,\sharp\left(\Xi(r)\right)
=r^{D-\delta}\E\sum_{\tau\in\Si(\r)\setminus\Si(\r,\alpha)}\sharp(\Xi^{[\tau]}(r/\bfr_\tau))\\
&=&\E\sum_{\tau\in\Si(\r)\setminus\Si(\r,\alpha)}(\bfr_\tau)^{D-\delta} (r/\bfr_\tau)^{D-\delta}\sharp\big(\Xi^{[\tau]}(r/\bfr_\tau)\big)\\
&=&\sum_{n=1}^M\E\sum_{\substack{\tau\in\Si(\r)\setminus\Si(\r,\alpha)\\ |\tau|=n}} (\bfr_\tau)^{D-\delta}(r/\bfr_\tau)^{D-\delta}\sharp\big(\Xi^{[\tau]}(r/\bfr_\tau)\big)\\
&=&\sum_{n=1}^M\E\sum_{\substack{\tau\in\Si(\r)\setminus\Si(\r,\alpha)\\ |\tau|=n}} (\bfr_\tau)^{D-\delta} \psi(r/\bfr_\tau)
=\E\sum_{\tau\in\Si(\r)\setminus\Si(\r,\alpha)} (\bfr_\tau)^{D-\delta}\psi(r/\bfr_\tau)\\
&\le&\E\sum_{\tau\in\Si(\r)\setminus\Si(\r,\alpha)}(\bfr_\tau)^{D-\delta}\sup_{r'\ge r/\r}\limits\psi(r')=\sup_{r'\ge r/\r}\limits\psi(r')\, ,
\end{eqnarray*}
where we have used \eqref{cond_exp_equation} in the fifth equality and \eqref{delta_} at the end. Hence, $\psi(r)\le\sup_{r'\ge r/\r}\limits\psi(r')$ for any $r<\r$ which implies
$$\sup_{r\ge\r^{k+1}}\psi(r)\le \sup_{r\ge\r^k}\psi(r)~~\mbox{for all}~k\in\N\, .$$
Since the function $\psi$ is bounded on any finite interval away from zero, it is bounded on $(0,R)$. This completes the proof of \eqref{psi} and thus of Lemma~\ref{L_2}.
\end{proof}

Now we are ready to provide the missing proof of Lemma~\ref{lem:main-xi_k}.
\begin{proof}[Proof of Lemma~\ref{lem:main-xi_k}] \label{proof-lem:main-xi_k}
Note that, by definition, $\xi_k(r)=0$ for $r>R$. For $0<r\leq R$, we have
\begin{align*}
    |\xi_k(r)|&=\left| C_k(F(r))-\sum_{i=1}^NC_k(F_i(r))+\sum_{i=1}^N\1_{(Rr_i,R]}(r)C_k(F_i(r))\right|\\
&\leq \left| C_k(F(r))-\sum_{i=1}^NC_k(F_i(r))\right|+\sum_{i=1}^N\1_{(Rr_i,R]}(r)C_k^{\var} (F_i(r))\\
&\leq \left| C_k(F(r))-\sum_{i=1}^NC_k(F_i(r))\right|+b_kNr^k
\end{align*}
for some constant $b_k$, where we have used Lemma~\ref{lems:large distances} in the last estimate, justified by the fact that $|F_i|\leq r_i|O|=(|O|/R)Rr_i$ and $|O|/R>\sqrt{2}$.

Set $A_r:=(\mathfrak{f}(O)^c)(r)$ (see \eqref{fO} for the definition of $\mathfrak{f}$), and note that, since curvature measures are locally defined (and $F(r)\cap O_i=F_i(r)\cap O_i$ for $i\leq N$),

$$C_k(F(r),(A_r)^c)=\sum_{i=1}^NC_k(F_i(r),(A_r)^c).$$
Hence we have
\begin{align*}
    |\xi_k(r)|&\leq \left| C_k(F(r),A_r)-\sum_{i=1}^NC_k((F_i(r),A_r)\right|+b_kNr^k\\
    &=\left| C_k(F(r),A_r)-\sum_{i=1}^N r_i^k C_k(F^{[i]}(r/r_i),f_i^{-1}(A_r))\right|+b_kNr^k\\
    &\leq C_k^{\var}(F(r),A_r)+\sum_{i=1}^Nr_i^kC_k^{\var}(F^{[i]}(r/r_i),f_i^{-1}(A_r))+b_kNr^k\\
    &\leq C_k^{\var}(F(r),A_r)+\sum_{i=1}^Nr_i^kC_k^{\var}(F^{[i]}(r/r_i),A^{[i]}_{r/r_i})+b_kNr^k,
\end{align*}

where we have used the inclusion $f_i^{-1}(A_r)\subset O^c(r/r_i)\subset A^{[i]}_{r/r_i}$ in the last step, cf.\ \cite[p.21]{RWZ23}).

Further, since $F(r)\cap A_r\subset\bigcup_{\sigma\in\Sigma_b(r)}O_\sigma(r)$, we have
\begin{align} \label{eq:Ar-est}
C_k^{\var}(F(r),A_r)=C_k^{\var}(F(r),f(r)\cap A_r)\leq\sum_{\sigma\in\Sigma_b(r)}C_k^{\var}(F(r),O_\sigma(r)).
\end{align}
Using that curvature measures are locally determined, we infer that for any $\sigma\in\Sigma_b(r)$,
\begin{align*}
    &C_k^{\var}(F(r),O_\sigma(r))\\   &=C_k^{\var}\left(F(r),O_\sigma(r)\setminus\bigcup_{\substack{\sigma'\in\Sigma_b(r)\\ \sigma'\neq\sigma}} O_{\sigma'}(r)\right)+C_k^{\var}\left(F(r),O_\sigma(r)\cap\bigcup_{\substack{\sigma'\in\Sigma_b(r)\\\sigma'\neq\sigma}}O_{\sigma'}(r)\right)\\
    &\leq C_k^{\var}(F_\sigma(r))
    +\sum _{\substack{\sigma'\in\Sigma_b(r)\\ \sigma'\neq\sigma}}C_k^{\var}\left(F(r),O_\sigma(r)\cap O_{\sigma'}(r)\right).
\end{align*}
Since $|F_\sigma|\leq r_\sigma |O|<\frac rR\frac{R}{\sqrt{2}}=\frac{r}{\sqrt{2}}$ (by the choice of the constant $R$ in the definition of $\Sigma(r)$, see \eqref{subtree-r}), we have $C_k^{\var}(F_\sigma(r))\leq d_kr^k$ for some constant $d_k>0$ by Lemma~\ref{lems:large distances}. Using further the fact that the number of $\sigma'\in\Sigma(r)$ such that $O_\sigma(r)\cap O_{\sigma'}(r)\neq\emptyset$ is bounded by a constant $\Gamma$ (see \cite[Lemma~5.1]{RWZ23}) and assumption (ii), we get
$$C_k^{\var}(F(r),O_\sigma(r))\leq d_kr^k+\Gamma c_kr^k=:\tilde{c}_kr^k.$$
Combining this with \eqref{eq:Ar-est} yields
\begin{align}\label{eq:Ar-est2} C_k^{\var}(F(r),A_r)\leq\tilde{c}_kr^k\sharp\left(\Sigma_b(r)\right).
\end{align}
Applying this and the corresponding estimates for $C_k^{\var}(F^{[i]}(r/r_i),A^{[i]}_{r/r_i})$, for $i\leq N$, we infer
\begin{align} \notag
  |\xi_k(r)|&\leq \tilde{c}_kr^k\sharp\left(\Sigma_b(r)\right)+\sum_{i=1}^Nr_i^k  \tilde{c}_k\, (r/r_i)^k\sharp\left(\Sigma^{[i]}_b(r/r_i)\right)+b_kNr^k.\\
  & \label{eq:Ar-est3} = \tilde{c}_kr^k\left(\sharp\left(\Sigma_b(r)\right)+\sum_{i=1}^N \sharp\left(\Sigma^{[i]}_b(r/r_i)\right)\right)+b_kNr^k.
\end{align}
Now, in order to show \eqref{locbound}, fix some $r_0>0$, and note that for any $r>r_0$, $\sharp\left(\Sigma_b(r)\right)\leq\sharp\left(\Sigma(r)\right)$ (and similarly $\sharp\left(\Sigma^{[i]}_b(r/r_i)\right)$) is bounded a.s.\ by a constant independent of $r$. (This follows by a simple volume comparison argument, since the family $\{O_\sigma:\, \sigma\in\Sigma(r)\}$ is pairwise disjoint a.s., by the (UOSC) assumption.) Therefore, and since $\E N<\infty$, the estimate \eqref{locbound} follows.

It remains to show \eqref{bound_2}, for which we infer from Lemma~\ref{L_2} that there are  $\delta,d_k>0$ such that
$\E\sharp\left(\Sigma_b(r)\right)\leq d_kr^{\delta-D}$ for all $0<r<R$. For the second summand we apply \eqref{cond_exp_equation_1} with the random function $X(s):=\sharp\Sigma_b(s)$ and $h(\rho,X):=X(r/\rho)$ and obtain
\begin{align*}
\E\sum_{i=1}^N \sharp\left(\Sigma^{[i]}_b(r/r_i)\right)&=\sum_{i=1}^\infty \E\left(\1\{i\in\Sigma_1\} \sharp\left(\Sigma^{[i]}_b(r/r_i)\right)\right)\\
&=\sum_{i=1}^\infty \P (i\in\Sigma_1)\int \E\,\sharp\left(\Sigma_b(r/\rho)\right)\P_{r_i}(d\rho| i\in\Sigma_1),
\end{align*}
where $\P_{r_i}(\cdot| i\in\Sigma_1)$ is the conditional distribution of $r_i$ under the condition $i\in\Sigma_1$.
Applying Lemma~\ref{L_2} again, we infer that the integrand is bounded by the expression ${d}_k  (r/\rho)^{\delta-D}\leq {d}_k r_{\max}^{D-\delta} r^{\delta-D}$ (with the same constants $d_k,\delta>0$ as above) and so the integral is bounded by the same constant (whenever $i\leq N$).  We conclude, that the whole expression is bounded by ${d}_k r_{\max}^{D-\delta} \E N\, r^{\delta-D}$. Now \eqref{bound_2} follows by taking expectations in \eqref{eq:Ar-est3} and using the above estimates. Here we also used that, by Lemma~\ref{L_2}, $D-\delta>0$ such that the terms containing $r^{k-D+\delta}$ dominate the one containing $r^k$.
\end{proof}
{\it Proof of Theorem} \ref{positive}. The argument is literally the same as the one in the proof of \cite[Theorem~2.3]{Za23}. Instead of the independence structure used there for the expectation estimates, here the back path property is employed. Therefore we omit the details here.

\vspace{3mm}
\begin{rem}\label{rem:extension}
An analysis of the proofs shows that our method can be extended to the following more general class of models.
Instead of Assumption (A2) suppose that there exists a random stopping time $\nu=\nu(\mathcal{T})$ in the levels of the tree (with respect to the filtration given by the labels up to the different steps) such that
\begin{align} \notag
\mathbb{P}\big((f_{\sigma_1},&f_{\sigma_1\sigma_2},\ldots, f_{\sigma_1\ldots\sigma_n})\in B_n,\, \sigma\in\Sigma_n,
\,\nu(\mathcal{T})=n,\, \mathcal{T}^{[\sigma]}\in C\big) \\
=\, &\mathbb{P}\big((f_{\sigma_1},f_{\sigma_1\sigma_2},
\ldots,f_{\sigma_1\ldots\sigma_n})\in B_n,\,\sigma\in\Sigma_n,\, \nu(\mathcal{T})=n\big)\, \mathbb{P}(\mathcal{T}\in C)
\end{align}
holds for all measurable sets $B_n$ and $C$ in the corresponding spaces.
Moreover, suppose that the mean number of elements of the Markov stop
$\Sigma^\nu:=\{\sigma\in\Sigma_*: |\sigma|=\nu\}$ is finite.

Then the mean Minkowski dimension $D$ of $F$ in the sense of Remark~\ref{rem:dimensions} is determined by the equation
$$
\mathbb{E}\sum_{\sigma\in\Sigma^\nu} {\bf r}_\sigma^D=1.
$$
Moreover, the assertions of Theorem~\ref{maintheorem} remain valid, if in the definition of the function $R_k$ the summation over $i\leq N$ is replaced by that over the elements $\sigma\in\Sigma^\nu$ and the random sets $F_i$ are replaced by the sets $F_\sigma$, accordingly. In addition,  the set $\Sigma(r)$ in condition (ii) must be redefined. Here and in the proofs
the sets $\Sigma_n$ have to be replaced by the inductively defined code sets $$\Sigma_n^\nu:=\big\{\sigma\tau\in\Sigma_*:\,\sigma\in\Sigma_{n-1}^\nu,\,|\tau|=\nu(\T^{[\sigma]})\big\},$$
where $\Sigma_0^\nu=\emptyset$.
The renewal theorem is applied with respect to the random time $\nu$, the subtrees at this level and the distribution
$$\mu:=\mathbb{E}\sum_{\sigma\in\Sigma_\nu} \bf{1}(|\ln({\bf r}_\sigma)|\in(\cdot))\, {\bf r}_\sigma^D.$$
Note that the above assumption (A2) can be considered as the special case $\nu=1$. In the model of \cite{Tr21}, which goes back to \cite{JJK+14}, the random number $\nu$ coincides with the first neck in the tree.
\end{rem}

\end{document}